\documentclass[11pt,letterpaper]{amsart}

\usepackage{amssymb, amsmath, amsfonts, latexsym}
\usepackage{enumerate}

\usepackage{color}
\usepackage{diagbox}
\newtheorem{thm}{Theorem}[]
\newtheorem{lem}{Lemma}[section]

\newtheorem{rmk}{Remark}[section]
\theoremstyle{definition}

\usepackage{graphicx}
\usepackage{subcaption}
\numberwithin{equation}{section} \theoremstyle{remark}

\title[Limiting Distribution of the Rightmost Eigenvalue ]{Limiting distributions of the rightmost eigenvalue of the truncated circular Unitary ensemble across all truncation regimes}

\author{Y\MakeLowercase{utao} M\MakeLowercase{a} and X\MakeLowercase{ujia} M\MakeLowercase{eng}}
\address{School of Mathematical Sciences $\&$ Laboratory  of Mathematics and Complex Systems of Ministry of Education, Beijing Normal University, 100875 Beijing, China.}
\thanks{The research of Yutao Ma was supported in part by NSFC 12571149 and 985 Projects.}
\email{mayt@bnu.edu.cn, 202321130122@mail.bnu.edu.cn}
\begin{document}

	\begin{abstract}
We study the limiting distribution of the largest real part of the eigenvalues of truncated unitary matrices. Let \(U_m\) be an \(m\times m\) Haar-distributed unitary matrix, and let \(Z_1,\ldots,Z_n\) be the eigenvalues of its upper-left \(n\times n\) principal submatrix. We determine the limiting distribution of \(\max_{1\le i\le n}\Re Z_i\) under a unified normalization: the limiting law is Gumbel when $q:=m-n\to\infty$ and is of Weibull type when $q$ is fixed. The fixed-\(q\) limiting laws converge to the Gumbel law as \(q\to\infty\). We also give a unified formulation of the normalizing constants for the spectral radius that recovers the previously known limiting laws. 	\end{abstract}
\maketitle

	\textbf{Keywords}:  Truncated unitary matrices; determinantal point processes; rightmost eigenvalue; Gumbel distribution; Weibull-type distribution.

	\textbf{Mathematics Subject Classification (2020):} 60B20, 60B10, 62G32.
\section{Introduction}
The eigenvalues of principal submatrices of Haar-distributed unitary matrices have been studied extensively in random matrix theory, in connection with quantum chaos and determinantal point processes on the unit disk. Zyczkowski and Sommers \cite{ZyczkowskiSommers2000} derived the joint eigenvalue density and established the limiting empirical spectral distribution. Subsequent work by Collins \cite{Collins2005}, Dong et al.\ \cite{Dong2012}, and Jiang \cite{Jiang2009} developed further connections with Jacobi ensembles, the circular law, and the arc law.

Let \(U_m\) be an \(m\times m\) Haar-distributed unitary matrix. We consider its upper-left \(n\times n\) principal submatrix and denote its eigenvalues by \(Z_1,\ldots,Z_{n}\). Write \(q:=m-n\) for the number of deleted rows and columns. Throughout, we assume that \(n\to\infty\) as \(m\to\infty\).
 By \cite{ZyczkowskiSommers2000}, their joint density is proportional to
 \begin{equation}\label{den}
 	\prod_{1\leq j<k\leq n}|z_{j}-z_{k}|^{2}\prod_{j=1}^{n}(1-|z_{j}|^{2})^{q-1}.
 \end{equation}
 The correlation kernel of this determinantal point process, given in \cite{Byun2025}, reads
 $$ 	K_{n}(z,w)=\frac{q(1-|w|^2)^{(q-1)/2}(1-|z|^2)^{(q-1)/2}}{\pi(1-w\bar{z})^{q+1}}\bigl(1-I_{w\bar{z}}(n,q+1)\bigr),$$
where \(I_x(a,b)\) denotes the regularized incomplete beta function
\[
I_x(a,b)=\frac{\Gamma(a+b)}{\Gamma(a)\Gamma(b)}\int_0^x u^{a-1}(1-u)^{b-1}\,du.
\]
The kernel admits a finite-rank representation in terms of orthonormal functions on the unit disk \(\mathbb{D}\). Indeed, the identity
\[
\sum_{j=0}^{n-1}\frac{q\,\Gamma(q+1+j)}{\Gamma(q+1)\Gamma(j+1)}\,t^j
=\frac{q}{(1-t)^{q+1}}\bigl(1-I_t(n,q+1)\bigr),
\]
yields
\[
K_{n}(z,w)=\sum_{j=0}^{n-1}\phi_j(w)\overline{\phi_j(z)},
\]
where the functions \(\phi_j\) are obtained by normalizing the monomials \(z^j\) with respect to the weight \((1-|z|^2)^{q-1}\) on \(\mathbb{D}\) and multiplying by the square root of this weight. Explicitly,
\begin{equation}\label{phiexp}
\phi_j(z)=\sqrt{\frac{\Gamma(q+1+j)}{\pi\,\Gamma(q)\Gamma(j+1)}}\,(1-|z|^2)^{\frac{q-1}{2}}z^j,\qquad z\in\mathbb{D},
\end{equation}
with normalization \(\int_{\mathbb{D}}|\phi_j|^2\,d^2z=1\).

The extreme value statistics of eigenvalues of such truncated unitary matrices have been studied extensively in recent years. For the spectral radius \(\max_{1\le j\le n}|z_j|\), Jiang and Qi \cite{JiangQi2017} proved convergence to the Gumbel distribution under the condition that \(n/m\) is bounded away from \(0\) and \(1\). Subsequently, Gui and Qi \cite{GuiQi2018} extended this result to other truncation regimes, establishing Gumbel convergence when \(n/m\to 0\), when \(q/(\log m)^3\to\infty\), and when  \(1\ll q\ll\log m\), and obtained a reversed Weibull limit when \(q\) is fixed. Miao and Qi \cite{MiaoQi2021} later filled the remaining gap when \(q\) lies between the scales \(\log m\) and \((\log m)^3\), thereby completing the weak convergence theory for the spectral radius.

These results use different normalizations in different ranges of the truncation parameter \(q\). A single formulation valid across all regimes is therefore desirable. In addition, the cited works concern the spectral radius; the corresponding problem for the largest real part has not been addressed for the truncated unitary ensemble. Related results are available for other non-Hermitian models, including the complex Ginibre ensemble and general i.i.d.\ complex random matrices \cite{Cipolloni2022, Cipolloni2023, HuMa2026, MaMeng2026} etc.

In this paper, we determine the limiting distribution of the largest real part for both $q\to\infty$ and fixed \(q\), using normalizing constants defined by a single implicit equation. The limiting law is Gumbel in the former case and of power-exponential type in the latter. The fixed-\(q\) laws converge to the Gumbel law as \(q\to\infty\). We also give a unified normalization for the spectral radius, recovering the limiting laws in \cite{GuiQi2018, JiangQi2017, MiaoQi2021}.

Recall that $q=m-n$. In all three regimes, let $\lambda_n\in(0,1)$
be the solution of
\begin{equation}\label{equation}\tag{U}
 \frac{mq}{n}(\lambda-1-\log\lambda)-\frac12\log\lambda
 +\frac52\log(1-\lambda)
 =\log\frac{n^{3/2}q^{q+1/2}}
 {2m(q+1)\sqrt\pi\,\Gamma(q+3/2)e^q}.
\end{equation}
We keep the gamma factor in exact form, including in the fixed $q$ regime.
The left-hand side is strictly decreasing, with derivative
\[
 -\frac{mq(1-\lambda)}{n\lambda}-\frac1{2\lambda}
 -\frac5{2(1-\lambda)}<0,
\]
and its limits at $0$ and $1$ are $+\infty$ and $-\infty$, respectively.
Thus, the solution exists and is unique. In Sections 3--5, we derive the asymptotic form of \(\lambda_n\) appropriate to each range of \(q\).
%Write$s_n=mn/q$.

Let $R_n=\max_{1\le i\le n}\Re Z_i$, and define
$$\widehat A_n=\sqrt{1-\frac{q\lambda_n}{m}},\qquad
\widehat B_n=\frac{\widehat A_n\lambda_n}{2m(1-\lambda_n)},
\qquad
X_n=\frac{R_n-\widehat A_n}{\widehat B_n}. $$
The following theorem gives the limiting distribution of \(X_n\).

\begin{thm}\label{main}
	Let \(X_n\) be defined as above. Then the following hold.
	\begin{itemize}
		\item If \(q\gg 1\), then for every fixed \(x\in\mathbb{R}\),
		\[
		\lim_{n\to\infty}\mathbb{P}(X_n\le x)=e^{-e^{-x}}.
		\]
		\item If \(q\) is a fixed positive integer, then
		\[
		\lim_{n\to\infty}\mathbb{P}(X_n\le x) =
		\begin{cases}
			e^{-(1-x/q)^{q+1/2}}, & x \leq q; \\
			1, & x > q.
		\end{cases}
		\]
	\end{itemize}
\end{thm}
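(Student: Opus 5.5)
The proof rests on the determinantal structure. Set $a_n=\widehat A_n+x\widehat B_n$ and $H_n=\{z\in\mathbb D:\Re z>a_n\}$. Then
\[
\mathbb P(X_n\le x)=\mathbb P(\text{no }Z_i\in H_n)=\det(I-K_n)_{L^2(H_n)} .
\]
We want to show that this converges to $e^{-\Lambda(x)}$, where $\Lambda(x)=e^{-x}$ if $q\to\infty$ and $\Lambda(x)=(1-x/q)_+^{q+1/2}$ if $q$ is fixed. The standard route goes through the expected count. Let
\[
\mu_n=\int_{H_n}K_n(z,z)\,d^2z=\mathbb E\#\{i:Z_i\in H_n\}.
\]
Since $0\le K_n|_{H_n}\le I$, it suffices to prove two things: $\mu_n\to\Lambda(x)$, and $\operatorname{tr}(K_n|_{H_n})^2=o(\mu_n)$, or more generally that all higher factorial moments $\int_{H_n^k}\det[K_n(z_i,z_j)]$ converge to $\Lambda^k$. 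Then the eigenvalues of the restricted operator behave like a single small eigenvalue, and $\det(I-K)=\prod(1-\kappa_j)\to e^{-\Lambda}$. Equivalently, the factorial moments identify a Poisson limit for the number of points in $H_n$.

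\textbf{Step 1: the one-point density.} By \eqref{phiexp},
\[
K_n(z,z)=\frac{q(1-r^2)^{q-1}}{\pi(1-r^2)^{q+1}}\bigl(1-I_{r^2}(n,q+1)\bigr),\qquad r=|z|.
\]
In the edge region, $1-I_{r^2}(n,q+1)=\mathbb P(\mathrm{Beta}(n,q+1)>r^2)$, and I will expand it by a Laplace/saddle-point analysis of the beta integral. With $r^2=1-q\lambda/m$ (or $r^2$ near $\lambda$-related points), the exponent is $\frac{mq}{n}(\lambda-1-\log\lambda)$ up to Stirling corrections. These corrections produce the factor $q^{q+1/2}/(\Gamma(q+3/2)e^q)$, which is kept exactly for fixed $q$.

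\textbf{Step 2: integrating over the half-plane.} I will write $z=a_n+s+it$ with $s>0$ and integrate in $t$ by Laplace's method. The density depends on $|z|^2=(a_n+s)^2+t^2$, so the $t$-integral contributes a Gaussian factor of width $\sim\sqrt{(1-\lambda_n)/m}$-type. This is the source of the $\frac52\log(1-\lambda)$ and $-\frac12\log\lambda$ terms in \eqref{equation}. The $s$-integral is then an exponential with scale $\widehat B_n$.

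The choice of $\widehat A_n,\widehat B_n$ via \eqref{equation} is designed precisely so that $\mu_n$ at $x=0$ tends to $1$. Shifting by $x\widehat B_n$ multiplies the result by $e^{-x}$ when $q\to\infty$. For fixed $q$, the boundary $\Re z=1$ is at distance $q\widehat B_n$ (indeed $1-\widehat A_n\approx q\lambda_n/(2m)$ and $\widehat B_n\approx\lambda_n/(2m)$ since $1-\lambda_n\to1$ there). The weight $(1-r^2)^{q-1}$ then produces the polynomial $(1-x/q)^{q+1/2}$ rather than an exponential: the power $q-1$ from the weight, plus $3/2$ from the $t$-Gaussian and the $s$-integration.

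\textbf{Step 3: uniform error control across regimes.} I expect this to be the main obstacle. The three regimes $q\ll\log m$, $q\sim(\log m)^{\alpha}$, and $q\propto m$ (with $\lambda_n$ near $1$ or near $0$ accordingly) need error terms in the beta-tail asymptotic that are uniform. I would use a Temme-type uniform expansion of $I_x(n,q+1)$, or bound the ratio of the tail to its leading term directly by integration by parts, and treat the asymptotics of $\lambda_n$ separately in each range as foreshadowed for Sections 3--5.

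\textbf{Step 4: the off-diagonal term.} To show $\operatorname{tr}(K_n|_{H_n})^2=\int\!\!\int_{H_n^2}|K_n(z,w)|^2\to\Lambda^2$, I will use the closed form for $K_n(z,w)$. Near the rightmost edge point, the points of $H_n$ concentrate in a window of height $\ll$ the correlation length in angle. Thus $|K_n(z,w)|^2\approx K_n(z,z)K_n(w,w)$, with the factor $|1-w\bar z|^{-2(q+1)}$ versus $(1-|z|^2)^{-(q+1)}(1-|w|^2)^{-(q+1)}$ controlled by Cauchy--Schwarz.

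Since the limit is Poisson with mean $\Lambda$ rather than a degenerate eigenvalue, the cleaner route is to bound the $k$-point correlations by Hadamard, $\det\le\prod K(z_i,z_i)$. This gives $\mathbb E\binom{N}{k}\le\mu_n^k/k!$ and hence tightness. Combined with $\mu_n\to\Lambda$ and $\det(I-K)\ge1-\mu_n$, this yields convergence via the inclusion–exclusion (Bonferroni) sandwich, once the second factorial moment is shown to tend to $\Lambda^2$.
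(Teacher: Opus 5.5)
There is a genuine gap, and it is in Step~4, which is where the Poisson structure has to come from. Your framework is the right one: gap probability $=\det(I-K_n|_{H_n})$, expected count $\mu_n\to\Lambda$, plus a smallness condition on $K_n|_{H_n}$. Working directly with $\{\Re z>a_n\}$ is also fine. But Step~4 targets the wrong quantity and rests on a reversed heuristic. What is needed is
\[
\operatorname{tr}\bigl((K_n|_{H_n})^2\bigr)=\iint_{H_n^2}|K_n(z,w)|^2\,d^2z\,d^2w\longrightarrow 0 .
\]
This is what your first paragraph ($o(\mu_n)$) and your last sentence (second factorial moment $\mu_n^2-\operatorname{tr}((K_n|_{H_n})^2)\to\Lambda^2$) actually require. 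Step~4 instead tries to show that this quantity tends to $\Lambda^2$, via $|K_n(z,w)|^2\approx K_n(z,z)K_n(w,w)$ on $H_n^2$. That is impossible when $\Lambda>1$ (which happens for every $x<0$), since $0\le K_n|_{H_n}\le I$ forces $\operatorname{tr}(K^2)\le\operatorname{tr}K=\mu_n$. For $\Lambda\le1$ it would make the restricted operator essentially rank one with eigenvalue $\approx\Lambda$. Then $\det(I-K)\to1-\Lambda$ and the second factorial moment tends to $0$, so there would be no Poisson limit.

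The geometry is in fact the opposite of what you describe. The one-point function varies in $|z|^2$ on the scale $B_n=2\widehat A_n\widehat B_n$ across the boundary. So the relevant part of $H_n$ has angular half-width of order $\sqrt{B_n}/a_n$, not $\sqrt{(1-\lambda_n)/m}$ as in your Step~2. The angular correlation scale, by contrast, is the reciprocal of the number of basis indices $j$ carrying the trace. For $q\gtrsim m^{2/5}(\log m)^2$ these two scales are $(s_n\gamma_n)^{-1/4}$ and $\sqrt{\gamma_n/s_n}$, with $s_n=mn/q$ and $\gamma_n\sim\frac12\log s_n$. The window therefore spans about $s_n^{1/4}\gamma_n^{-3/4}\to\infty$ correlation lengths. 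For fixed $q$ they are $\sqrt{\lambda_n/m}$ and $1/m$, a ratio $\sqrt{m\lambda_n}\to\infty$. This decorrelation is precisely what produces the Gumbel and Weibull-type laws. The Cauchy--Schwarz bound you invoke only gives $\operatorname{tr}(K^2)\le\mu_n^2=O(1)$ and cannot detect it.

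So the missing ingredient is a genuine off-diagonal decay estimate showing $\operatorname{tr}(K^2)\to0$. The paper obtains it by writing $\operatorname{tr}((K_n|_{A(x)})^2)=\|M(x)\|_{\mathrm{HS}}^2$ with $M_{j,k}(x)=\int_{A(x)}\overline{\phi_{n-j}}\,\phi_{n-k}\,d^2z$. In polar coordinates the angular factor $\int_{-\beta_L(u)}^{\beta_L(u)}e^{i(j-k)\theta}\,d\theta$ is at most $2/|j-k|$. Together with log-convexity of the beta function, this gives $|M_{j,k}(x)|\le\frac{1}{\pi|k-j|}\,\mathbb P\bigl(\operatorname{Beta}(n-\tfrac{j+k}{2}+1,q)>L_n(x)\bigr)$ (Lemma~\ref{integral-rep}). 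Lemma~\ref{HS} then splits $\sum_{j,k}|M_{j,k}|^2$ into three parts, each shown to be $o(1)$ in every regime: large indices (diagonal monotonicity and $nM_{\alpha_n,\alpha_n}\to0$), near-diagonal pairs (at most $\beta_n\alpha_nM_{1,1}^2$), and far pairs (summable $|j-k|^{-2}$ against the beta tails). Once this is in hand, \eqref{trhs} gives $\det(I-K)\sim e^{-\operatorname{tr}K}$ directly. The Hadamard/Bonferroni detour is then unnecessary, and a sandwich using only two factorial moments would not identify $e^{-\Lambda}$ anyway.

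Your Steps~1--2, integrating the closed-form one-point function directly, are a legitimate alternative to the paper's term-by-term computation of $M_{j,j}(x)$. Step~3, however, is only a plan, and uniform beta-tail asymptotics are where most of the paper's work lies. It uses a Cram\'er-type moderate deviation for the gamma representation when $q\gtrsim m^{2/5}(\log m)^2$, direct Laplace-type estimates for $1\ll q\ll m^{2/5}(\log m)^2$, and an endpoint expansion for fixed $q$.
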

\begin{rmk}
	The fixed-\(q\) limiting distribution functions converge pointwise to the Gumbel distribution function as \(q\to\infty\). Indeed, for each fixed \(x\in\mathbb{R}\),
	\[
	\lim_{q\to\infty}\bigl(1-\frac{x}{q}\bigr)^{q+1/2}=e^{-x}.
	\]
	It follows that
	\[
	\lim_{q\to\infty}\exp\bigl\{-\bigl(1-\frac{x}{q}\bigr)^{q+1/2}\bigr\}
	=
	e^{-e^{-x}}.
	\]
	This describes the transition from the fixed-\(q\) limiting laws to the Gumbel law.
\end{rmk}

\begin{rmk}
	Refining the trace estimates below gives the pointwise error bounds
	\[
	\begin{cases}
		(\log n)^{-1},
		& \log n\ll q<m,\\[1mm]
		q^{-1},
		& 1\ll q\lesssim\log n,\\[1mm]
		m^{-1/(2q+1)},
		& q \text{ fixed},
	\end{cases}
	\]
	for each fixed \(x\).
	The first two bounds are consistent at the crossover scale
	\(q\asymp\log n\), where \(q^{-1}\asymp(\log n)^{-1}\).
	For slowly diverging \(q\), the error bound deteriorates as \(q\)
	decreases, reflecting the slower approach to the Gumbel limit.
	When $q$ is fixed, the limiting law is of Weibull type rather than Gumbel, and the error scale is determined by
	\(\lambda_n\asymp m^{-1/(2q+1)}\).
\end{rmk}

\begin{rmk}
	For the spectral radius, the results of \cite{JiangQi2017,GuiQi2018,MiaoQi2021} use different normalizations in different regimes. Our method also yields a unified normalization for the spectral radius, recovering these limiting laws. Specifically, let \(\widetilde{\lambda}_n\in(0,1)\) be the unique solution of
	$$	\frac{mq}{n}(\widetilde{\lambda}-1-\log\widetilde{\lambda})+2\log(1-\widetilde{\lambda})
	=
	\log\frac{n^{3/2}q^q}{m^{1/2}\Gamma(q+2)e^{q}}. $$
	Define
	\[
	\widetilde{A}_n=\sqrt{1-\frac{q\widetilde{\lambda}_n}{m}},\qquad
	\widetilde{B}_n=\frac{\widetilde{A}_n\widetilde{\lambda}_n}{2m(1-\widetilde{\lambda}_n)},\qquad
	\widetilde{X}_n=\frac{\max_{1\le i\le n}|Z_i|-\widetilde{A}_n}{\widetilde{B}_n}.
	\]
	Then the following hold.
	\begin{itemize}
		\item If \(q\gg 1\), then for every fixed \(x\in\mathbb{R}\),
		\[
		\lim_{n\to\infty}\mathbb{P}(\widetilde{X}_n\le x)=e^{-e^{-x}}.
		\]
		\item If \(q\) is fixed, then
		\[
		\lim_{n\to\infty}\mathbb{P}(\widetilde{X}_n\le x)=
		\begin{cases}
			e^{-(1-x/q)^{q}},& x\le q,\\
			1,& x>q.
		\end{cases}
		\]
	\end{itemize}
	The proof follows the same strategy as that for the largest real part, with the half-plane exceedance region replaced by a radial exceedance region. The angular factor is consequently absent from the corresponding integrals, and we omit the analogous details.
\end{rmk}

\begin{rmk}
		The density \eqref{den} can be written as
	$$\prod_{1\leq j<k\leq n}|z_{j}-z_{k}|^{2}\prod_{j=1}^{n} \exp(-n \sum_{j=1}^n V_n(|z_j|))$$ with
	\[
	V_n(r)=-\frac{q-1}{n}\log(1-r^2).\] This is a Coulomb gas with radial $n$-dependent external fields.
	Set \[
	t_0=\sqrt{\frac{n}{m-1}},
	\qquad
	\alpha_n=\frac{4(m-1)^2}{n(q-1)}.
	\]
	Since $n\alpha_n t_0^2\sim4s_n$, where $s_n=mn/q$, we set, similarly to \cite{MaMeng2026v},
	\[
	\gamma_n^{\mathrm{gen}}
	=\frac12\log
	\frac{n\alpha_n t_0^2}{8\pi^4(\log s_n)^5},
	\qquad
	X_n^{\mathrm{gen}}
	=\sqrt{n\alpha_n\gamma_n^{\mathrm{gen}}}
	\biggl(
	R_n-t_0-\sqrt{\frac{\gamma_n^{\mathrm{gen}}}{n\alpha_n}}
	\biggr).
	\]
	For $(\log n)^3\ll q<m$, adapting the proof of \cite{MaMeng2026v}
	to $V_n$ gives
	\[
	\lim_{n\to\infty}\mathbb P(	X_n^{\mathrm{gen}}\le x)=\exp(-e^{-x}).
	\]
	In this regime, the explicit normalization above is
	asymptotically equivalent to the implicit normalization \eqref{equation}.
	
	Under the stronger condition $(\log n)^5\lesssim q<m$,
	the leading correction introduced by this explicit
	normalization is of order
	$\log\log s_n/\log s_n\asymp\log\log n/\log n$.
	It arises from approximating the prefactor
	$(\log s_n/(2\gamma_n^{\mathrm{gen}}))^{5/4}$ by $1$.
\end{rmk}

Throughout the paper, we use standard asymptotic notation. For a positive sequence $(b_n)$ and a real sequence $(a_n)$, we write
\[
a_n = O(b_n) \quad\text{if } \exists\, C>0 \text{ such that } |a_n| \le C b_n \text{ for all sufficiently large } n,
\]
and
\[
a_n = o(b_n) \quad\text{if } \lim_{n\to\infty} \frac{a_n}{b_n} = 0.
\]
We write \(a_n \ll b_n\) to mean \(a_n = o(b_n)\), \(a_n \lesssim b_n\) to mean \(a_n = O(b_n)\), and $a_n\asymp b_n$ to mean both $a_n\lesssim b_n$ and $b_n\lesssim a_n$. We write \(a_n \sim b_n\) when \(a_n/b_n \to 1\). All asymptotic relations are understood as \(n\to\infty\), unless otherwise specified.

The remainder of the paper is organized as follows. Section~2 introduces the squared statistic, reduces the proof to it, and collects the preliminary estimates for the matrix entries. Sections 3, 4 and 5 prove Theorem~1 in the regimes \(m^{2/5}(\log m)^2\lesssim q<m\), \(1\ll q\ll m^{2/5}(\log m)^2\), and fixed \(q\), respectively.

\section{Reduction to the squared statistic and preliminary estimates}

In this section we introduce the squared statistic \(\widehat X_n\),
reduce the linear statistic \(X_n\) to it, outline the proof of
Theorem~1, and collect the preliminary estimates for the matrix
entries \(M(x)\) used in the subsequent sections.

\subsection{Reduction to the squared statistic}

For the subsequent proofs, it is convenient to introduce
\[
\widehat X_n=\frac{R_n^2-A_n}{B_n},
\]
where
$$A_n=\widehat A_n^2=1-\frac{q\lambda_n}{m},\qquad
B_n=2\widehat A_n\widehat B_n
=\frac{A_n\lambda_n}{m(1-\lambda_n)}. $$

The two normalized statistics have the same limiting distribution.
To see this, note that the estimates in Sections 3--5 give
$B_n/A_n\to0$. If $\widehat X_n$ converges in distribution,
its tightness implies
\[
\frac{R_n^2}{A_n}
=1+\frac{B_n}{A_n}\widehat X_n
\xrightarrow{\mathbb P}1.
\]
Moreover, rotational symmetry and the determinantal gap formula
give $\mathbb P(R_n<0)\le e^{-n/2}$, as shown below.
Hence $R_n/\sqrt{A_n}\to1$ in probability.
On the event $\{R_n\ge0\}$, rationalizing the numerator yields
\[
X_n
=\frac{2\sqrt{A_n}}{R_n+\sqrt{A_n}}\,\widehat X_n.
\]
The prefactor converges to one in probability, while the
complementary event has probability tending to zero.
Slutsky's theorem therefore shows that $X_n$ converges to
the same limit as $\widehat X_n$.
It thus suffices to analyze $\widehat X_n$.

\subsection{Outline of the proof}
We now outline the proof for the largest real part.
Write $L_n(x)=A_n+B_nx$. For $L_n(x)>0$, define
\[
A(x)=\{z\in\mathbb D:\Re z>\sqrt{L_n(x)}\}.
\]
Then the event $\{R_n\le\sqrt{L_n(x)}\}$ is precisely
the event that $A(x)$ contains no eigenvalues.

The right half-disk \(\{z\in\mathbb{D}:\ \Re z>0\}\) has expected eigenvalue count \(n/2\) by rotational symmetry. Since the eigenvalue process is determinantal with a trace-class kernel \(\mathbb{K}_n\), its gap probability satisfies
\[
\mathbb{P}\bigl(R_n<0\bigr)
=\det({\rm I}-\mathbb{K}_n|_{\Re z>0})\le e^{-\operatorname{Tr} (\mathbb{K}_n|_{\Re z>0})}=e^{-n/2}.
\]

The event \(\{\widehat X_n\le x\}\) differs from  \(\{R_n\le \sqrt{L_n(x)}\}\) only when \(R_n<-\sqrt{L_n(x)}\). Since this event is contained in \(\{R_n<0\}\), the bound above yields
\[
\left|
\mathbb P(\widehat X_n\le x)
-\mathbb P\bigl(R_n\le \sqrt{L_n(x)}\bigr)
\right|
\le
\mathbb P\bigl(R_n<-\sqrt{L_n(x)}\bigr)
\le
\mathbb P(R_n<0)
\le e^{-n/2}.
\]

Thus, the distribution function of the squared statistic differs from the corresponding gap probability by an exponentially small error. The determinantal gap probability formula gives
\[
\mathbb{P}(R_n\le\sqrt{L_n(x)})=\det(\rm I-\mathbb{K}_n|_{A(x)}).
\]
Since the operator \(\mathbb{K}_n\) corresponding to $K_n$ has finite rank, this Fredholm determinant reduces to a matrix determinant. Define the \(n\times n\) matrix \(M(x)=(M_{j,k}(x))_{1\le j,k\le n}\) by
\begin{equation}\label{MJkdefine}
	M_{j, k}(x)=\int_{A(x)}\overline{\phi_{n-j}(z)}\,\phi_{n-k}(z)\,d^2z,
\end{equation}
where $\phi_j$ is given by \[
	\phi_j(z)=\sqrt{\frac{\Gamma(q+1+j)}{\pi\,\Gamma(q)\Gamma(j+1)}}\,(1-|z|^2)^{\frac{q-1}{2}}z^j,\qquad z\in\mathbb{D}.
	\]  
The finite-rank structure then yields the identity
\[
\det(\rm I-\mathbb{K}_n|_{A(x)})=\det(I_{n}-M(x)),
\]
which gives the probability representation
\begin{equation}\label{prob}
	\mathbb{P}(\widehat{X}_n\le x)=\det({\rm I}_{n}-M(x))+O(e^{-n/2}).
\end{equation}

We estimate the determinant in \eqref{prob} using the following standard bound for Fredholm determinants (see, e.g., \cite{Gohberg}):
\begin{equation}\label{trhs}
	\Bigl|\det(\rm I-\mathbb{K}_n|_A)-\exp\bigl(-\operatorname{Tr}(\mathbb{K}_n|_A)\bigr)\Bigr|
	\le \|\mathbb{K}_n|_A\|_2
	\exp\Bigl\{\frac{1}{2}\bigl(\|\mathbb{K}_n|_A\|_2+1\bigr)^2-\operatorname{Tr}(\mathbb{K}_n|_A)\Bigr\}.
\end{equation}
Consequently, whenever the Hilbert--Schmidt norm \(\|\mathbb{K}_n|_A\|_2\) is \(o(1)\), we have the asymptotic equivalence
\[
\det(\rm I-\mathbb{K}_n|_A)\sim \exp\bigl(-\operatorname{Tr}(\mathbb{K}_n|_A)\bigr).
\]
We verify this condition for \(A=A(x)\).
The definition of \(M(x)\) indicates that
\[
\operatorname{Tr}\bigl(\mathbb{K}_n|_{A(x)}\bigr)=\operatorname{Tr}(M(x)),
\qquad
\bigl\|\mathbb{K}_n|_{A(x)}\bigr\|_2^2=\|M(x)\|_{\mathrm{HS}}^2.
\]
Thus, it remains to establish
	\[
	\operatorname{Tr} (M(x))\longrightarrow
	\begin{cases}
	e^{-x}, & q\to\infty,\quad x\in\mathbb R,\\[2mm]
	(1-x/q)^{q+1/2}, & q\text{ fixed},\quad x<q,
	\end{cases}
	\quad\text{and}\quad
	\|M(x)\|_{\mathrm{HS}}=o(1).
	\]
	
\subsection{Preliminary estimates for the matrix entries}

	We begin with integral representations for the entries of \(M(x)\), which will be used throughout the proofs. 	
	The following lemma provides exact integral representations for these entries.

\begin{lem}\label{integral-rep}
Let $L=L_n(x)=A_n+B_nx$, with $0<L<1$, and set
\[
C_t=\sqrt{\frac{\Gamma(m-t+1)}
{\pi\Gamma(q)\Gamma(n-t+1)}},
\qquad
\beta_L(u)=\arccos\sqrt{L/u}
\]
for $t\in [1, n]$ and $u\ge L.$
For $1\le j\le n$, we have
\begin{align}
M_{j,j}(x)
&=C_j^2\int_L^1(1-u)^{q-1}u^{n-j}\beta_L(u)\,du,
\label{wMjj1}\\
&=\frac{C_j^2}{2}\int_L^1\frac{ds}{\sqrt{s(1-s)}}
\int_{L/s}^1(1-u)^{q-1}u^{n-j}\,du.
\label{wMjj2}
\end{align}
Moreover, $M_{j,j}(x)$ is nonincreasing in $j$.
For $j\ne k$, we have
\begin{equation}\label{wMjk}
|M_{j,k}(x)|
\le \frac{C_{(j+k)/2}^2}{|k-j|}
\int_L^1(1-u)^{q-1}u^{n-(j+k)/2}\,du.
\end{equation}

\end{lem}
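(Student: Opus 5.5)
We will compute every entry in polar coordinates, writing $z=re^{i\theta}$ so that the region is $A(x)=\{r\cos\theta>\sqrt L\}$. For fixed $r>\sqrt L$ the admissible angles are $|\theta|<\arccos(\sqrt L/r)$. Since $n-j=n-t$ with $t=j$, the constant in \eqref{phiexp} gives
\[
\frac{\Gamma(q+1+n-j)}{\pi\Gamma(q)\Gamma(n-j+1)}=C_j^2 .
\]
Hence
\[
M_{j,k}(x)=C_jC_k\int_{\sqrt L}^1(1-r^2)^{q-1}r^{2n-j-k+1}\int_{|\theta|<\arccos(\sqrt L/r)}e^{i(j-k)\theta}\,d\theta\,dr .
\]
For $j=k$ the angular integral equals $2\arccos(\sqrt L/r)$. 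Substituting $u=r^2$, so that $2r\,dr=du$, turns this into $\beta_L(u)=\arccos\sqrt{L/u}$ and yields \eqref{wMjj1} directly.

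For \eqref{wMjj2} we write $\beta_L(u)$ as an integral in a new variable $s$. Set $s=L/(u\cos^2\theta)$, which runs over $[L/u,1]$ as $\theta$ runs over $[0,\beta_L(u)]$. Equivalently, $\cos^2\theta=L/(us)$ is the same as $\theta=\arccos\sqrt{L/(us)}$. A cleaner route is the identity
\[
\arccos\sqrt a=\frac12\int_a^1\frac{dv}{\sqrt{v(1-v)}},
\]
which holds because
\[
\frac{d}{da}\arccos\sqrt a=-\frac{1}{2\sqrt{a(1-a)}} .
\]
Taking $a=L/u$ gives $\beta_L(u)=\frac12\int_{L/u}^1\frac{dv}{\sqrt{v(1-v)}}$. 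The domain of the resulting double integral is $\{L\le u\le1,\ L/u\le v\le1\}$, which equals $\{L\le v\le 1,\ L/v\le u\le1\}$, and Fubini then gives \eqref{wMjj2} with $s=v$.

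For monotonicity in $j$ we compare consecutive diagonal entries. In \eqref{wMjj2} the quantity $M_{j,j}$ equals $\tfrac12\int_L^1 (s(1-s))^{-1/2}P_j(L/s)\,ds$, where
\[
P_j(a)=C_j^2\int_a^1(1-u)^{q-1}u^{n-j}\,du .
\]
Since $C_j^2(1-u)^{q-1}u^{n-j}$ is a Beta$(n-j+1,q)$ density, $P_j(a)$ is the tail probability $\mathbb P(\mathrm{Beta}(n-j+1,q)>a)$. Beta laws with the same second parameter are stochastically increasing in the first parameter; this follows, for instance, from the monotone likelihood ratio $u^{1}$ between consecutive densities. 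So $P_j(a)$ decreases in $j$ for every $a$, and hence so does $M_{j,j}$.

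For the off-diagonal bound \eqref{wMjk}, take $j\ne k$ and set $\ell=j-k$. The angular integral is $2\sin(\ell\beta)/\ell$ with $\beta=\arccos(\sqrt L/r)$, which is bounded in absolute value by $2/|\ell|$. The radial part becomes
\[
C_jC_k\int_L^1(1-u)^{q-1}u^{n-(j+k)/2}\,du
\]
after the substitution $u=r^2$. It then remains to show $C_jC_k\le C_{(j+k)/2}^2$. This is log-concavity of
\[
t\mapsto \log\Gamma(m-t+1)-\log\Gamma(n-t+1).
\]
Its second derivative is $\psi'(m-t+1)-\psi'(n-t+1)\le0$, because $\psi'$ is decreasing and $m\ge n$. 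Midpoint concavity then gives $\tfrac12(\log C_j^2+\log C_k^2)\le\log C_{(j+k)/2}^2$. I expect this Gamma-ratio step, together with the Beta stochastic ordering, to be the only points needing care. Everything else is a direct computation.
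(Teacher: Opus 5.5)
Your proposal is correct and follows the paper's proof essentially step for step. It uses polar coordinates with $u=r^2$ for \eqref{wMjj1}, the identity $\arccos\sqrt a=\frac12\int_a^1 dv/\sqrt{v(1-v)}$ with Tonelli for \eqref{wMjj2}, and stochastic monotonicity of $\mathrm{Beta}(n-j+1,q)$ tails for monotonicity in $j$. For \eqref{wMjk} it uses the $2/|j-k|$ bound on the angular integral plus a convexity step; your concavity of $\log\Gamma(m-t+1)-\log\Gamma(n-t+1)$ is exactly the paper's log-convexity of $B(a,q)$ in $a$.

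One small slip: since $C_j^2=1/(\pi B(n-j+1,q))$, the function $C_j^2(1-u)^{q-1}u^{n-j}$ is $1/\pi$ times the $\mathrm{Beta}(n-j+1,q)$ density, not the density itself. That is why the paper's representation carries the prefactor $1/(2\pi)$. The constant is independent of $j$, so your monotonicity argument is unaffected.
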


\begin{proof}
Write $z=re^{i\theta}$ and $u=r^2$. For fixed $u\in(L,1)$,
the angular interval is $(-\beta_L(u),\beta_L(u))$.
Integrating over this interval and using $r\,dr=du/2$
gives \eqref{wMjj1}. The identity
\[
\beta_L(u)=\frac12\int_{L/u}^1\frac{ds}{\sqrt{s(1-s)}}
\]
and Tonelli's theorem then yield \eqref{wMjj2}.

To prove the monotonicity, let
$Y_{n-j}\sim\operatorname{Beta}(n-j+1,q)$.
By \eqref{wMjj2},
\[
M_{j,j}(x)
=\frac1{2\pi}\int_L^1
\frac{\mathbb P(Y_{n-j}>L/s)}{\sqrt{s(1-s)}}\,ds.
\]
For fixed second shape parameter $q$, the beta distribution is stochastically increasing in its first shape parameter. Hence the
integrand, and therefore $M_{j,j}(x)$, is nonincreasing in $j$.

For $j\ne k$, the angular integral satisfies
\[
\biggl|
\int_{-\beta_L(u)}^{\beta_L(u)}
e^{i(j-k)\theta}\,d\theta
\biggr|
=
\frac{2|\sin((j-k)\beta_L(u))|}{|j-k|}
\le \frac{2}{|j-k|}.
\]
Consequently,
\[
|M_{j,k}(x)|
\le \frac{C_jC_k}{|j-k|}
\int_L^1(1-u)^{q-1}u^{n-(j+k)/2}\,du.
\]
Since
\[
C_t^2=\frac1{\pi B(n-t+1,q)}
\]
and $\log B(a,q)$ is convex in $a>0$, we have
$C_jC_k\le C_{(j+k)/2}^2$.
This proves \eqref{wMjk}.
\end{proof}

	The following lemma provides a useful bound for the
Hilbert--Schmidt norm of $M(x)$.

	\begin{lem}\label{HS}
		Let \(\alpha_n\) and \(\beta_n\) be positive integer sequences satisfying \(\alpha_n\gg \beta_n\gg 1\). Assume that
		\[
		\operatorname{Tr}(M(x))=O(1),\qquad nM_{\alpha_n,\alpha_n}(x)\ll 1.
		\]
		Then, 
		\[
		\|M(x)\|_{\mathrm{HS}}^2
		\lesssim
		nM_{\alpha_n,\alpha_n}(x)
		+\beta_n\alpha_n\,(M_{1,1}(x))^2
		+\sum_{\substack{1\le j< k\le \alpha_n\\ k-j>\beta_n}}(M_{j,k}(x))^2.
		\]
	\end{lem}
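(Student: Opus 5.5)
We split the Hilbert--Schmidt sum $\|M(x)\|_{\mathrm{HS}}^2=\sum_{j,k}|M_{j,k}(x)|^2$ into three index regions. Region (i) consists of pairs with $\max(j,k)>\alpha_n$. Region (ii) consists of pairs with $j,k\le\alpha_n$ and $|j-k|\le\beta_n$. Region (iii) consists of pairs with $j,k\le\alpha_n$ and $|j-k|>\beta_n$. Since $M(x)$ is Hermitian, region (iii) contributes exactly twice the last sum in the statement (the paper writes $(M_{j,k})^2$, meaning $|M_{j,k}|^2$), so nothing needs to be done there.

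The basic tool for the other regions is that $M(x)$ is a positive semidefinite Gram matrix: $M_{j,k}=\langle \phi_{n-k}\mathbf 1_{A},\phi_{n-j}\mathbf 1_{A}\rangle$. Cauchy--Schwarz therefore gives
\[
|M_{j,k}(x)|^2\le M_{j,j}(x)M_{k,k}(x).
\]
For region (ii) we use the monotonicity in Lemma~\ref{integral-rep}, namely $M_{j,j}\le M_{1,1}$ for all $j$. This gives $|M_{j,k}|^2\le M_{1,1}^2$. The number of pairs in region (ii) is at most $\alpha_n(2\beta_n+1)\lesssim\alpha_n\beta_n$, so region (ii) contributes at most $\lesssim \beta_n\alpha_n M_{1,1}(x)^2$.

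For region (i), assume by symmetry that $k>\alpha_n$. We sum over $j$ for fixed $k$ and use the projection structure. Write $P_A$ for multiplication by $\mathbf 1_A$, so that $M_{j,k}=\langle P_A\phi_{n-k},\phi_{n-j}\rangle$. By Bessel's inequality for the orthonormal family $\{\phi_{n-j}\}$,
\[
\sum_j|M_{j,k}|^2\le\|P_A\phi_{n-k}\|^2=M_{k,k}.
\]
Hence
\[
\sum_{k>\alpha_n}\sum_j|M_{j,k}|^2\le\sum_{k>\alpha_n}M_{k,k}\le nM_{\alpha_n,\alpha_n}(x),
\]
again by monotonicity. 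Pairs with $j>\alpha_n\ge k$ are covered in the same way by symmetry. Region (i) therefore contributes $\lesssim nM_{\alpha_n,\alpha_n}(x)$.

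Adding the three regions gives the claimed bound. The hypotheses $\operatorname{Tr}M=O(1)$ and $nM_{\alpha_n,\alpha_n}\ll1$ are not even needed for the inequality itself. They only ensure that the terms are small and keep the bound meaningful, for instance through $M_{1,1}\le\operatorname{Tr}M$-type control. No step is a real obstacle. The only point needing care is making sure the Bessel argument uses orthonormality of the $\phi_j$ on $\mathbb D$, which holds by \eqref{phiexp}.
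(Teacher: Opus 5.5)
Your proof is correct. It differs from the paper's in how the pairs with an index above $\alpha_n$ are handled; regions (ii) and (iii) are treated exactly as in the paper.

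\textbf{The paper's route.} It splits off the block $j,k>\alpha_n$ and the mixed block $k\le\alpha_n<j$, and treats both with the entrywise bound $|M_{j,k}|^2\le M_{j,j}M_{k,k}$ plus monotonicity.
\begin{itemize}
\item The first block is bounded by $n^2M_{\alpha_n,\alpha_n}(x)^2$. This is $\lesssim nM_{\alpha_n,\alpha_n}(x)$ only because of the hypothesis $nM_{\alpha_n,\alpha_n}(x)\ll1$.
\item The mixed block is bounded by $nM_{\alpha_n,\alpha_n}(x)\operatorname{Tr}(M(x))$. This requires $\operatorname{Tr}(M(x))=O(1)$.
\end{itemize}
So the two hypotheses are exactly what the entrywise Cauchy--Schwarz route needs.

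\textbf{Your route.} You apply Bessel's inequality for the orthonormal family $\{\phi_{n-j}\}$, giving the column bound $\sum_j|M_{j,k}(x)|^2=(M(x)^2)_{k,k}\le M_{k,k}(x)$. This is the operator inequality $0\le M(x)\le I$: $M(x)$ is the compression of multiplication by $\mathbf{1}_{A(x)}$ to the span of the $\phi_j$, so $M(x)^2\le M(x)$. The column bound is stronger than summed Cauchy--Schwarz and handles both blocks at once. The inequality of the lemma therefore holds unconditionally, as you note.

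\textbf{Comparison.} The paper's argument uses only positive semidefiniteness and the monotone diagonal, so it would apply to any Gram matrix with those properties, at the price of the extra hypotheses. Yours exploits the contraction structure specific to this determinantal setting and removes them. Your remark about ``$M_{1,1}\le\operatorname{Tr}M$-type control'' is beside the point: in your version the hypotheses play no role in the bound at all.
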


	\begin{proof}
		By definition,
		\[
		\|M(x)\|_{\mathrm{HS}}^2=\sum_{1\le j,k\le n}(M_{j,k}(x))^2.
		\]
		We split the double sum into three regions.

		By the Cauchy--Schwarz inequality, the entries of \(M(x)\) satisfy
		\begin{equation}\label{CSIneq}
			M_{j,k}^2(x)\le M_{j,j}(x)M_{k,k}(x).
		\end{equation}
		First, for \(j,k>\alpha_n\), the monotonicity of \(M_{j,j}\) in \(j\) together with \eqref{CSIneq} gives
		\[
		\sum_{\alpha_n+1\le j,k\le n}(M_{j,k}(x))^2
		\le n^2(M_{\alpha_n+1,\alpha_n+1}(x))^2\le n^2(M_{\alpha_n,\alpha_n}(x))^2.
		\]
		Second, by symmetry, the regions $k\le\alpha_n<j$ and
$j\le\alpha_n<k$ contribute equally, so it suffices to
consider the former. By \eqref{CSIneq},
\[
\sum_{1\le k\le\alpha_n<j\le n}(M_{j,k}(x))^2
\le
\sum_{j=\alpha_n+1}^{n}M_{j,j}(x)
\sum_{k=1}^{n}M_{k,k}(x)
\le
nM_{\alpha_n,\alpha_n}(x)\operatorname{Tr}(M(x)).
\]
		Since \(\operatorname{Tr}(M(x))=O(1)\), this contribution is bounded by \(nM_{\alpha_n,\alpha_n}(x)\).

	Finally, consider $1\le j\le k\le\alpha_n$, again using symmetry. If \(k-j\le \beta_n\), the number of such pairs is \(O(\beta_n\alpha_n)\), and by Cauchy--Schwarz and the monotonicity of the diagonal entries,
		\[
		(M_{j,k}(x))^2\le M_{j,j}(x)M_{k,k}(x)\le (M_{1,1}(x))^2.
		\]
		Hence
		\[
		\sum_{\substack{1\le j\le k\le \alpha_n\\ k-j\le \beta_n}}(M_{j,k}(x))^2
		\lesssim \beta_n\alpha_n\,(M_{1,1}(x))^2.
		\]
		The terms with \(k-j>\beta_n\) give the final sum in the statement. Combining these estimates proves the lemma.
	\end{proof}
	The following asymptotic will be used later in the estimation of \(M_{j,k}(x)\).
	
	\begin{lem}\label{int}
		Given \(0<v<1\). We have
		\[
		\int_{v}^{1}
		\frac{(\frac{a}{s}-b)^{-1}}{\sqrt{s(1-s)}}
		\exp\{-\frac{1}{2}(\frac{a}{s}-b)^2\}\,ds
		\sim
		\frac{\sqrt{\pi}}{\sqrt{a}(a-b)^{\frac32}}
		\exp\{-\frac{1}{2}(a-b)^2\},
		\]
		provided that \(a,b\gg 1\), \(a-b\gg 1\), and \(a(1-v)/v\gg 1\).
	\end{lem}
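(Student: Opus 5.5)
We prove Lemma~\ref{int} by Laplace's method at the endpoint $s=1$. The phase $g(s)=\frac12(a/s-b)^2$ is increasing on $(v,1]$ as long as $a/s-b>0$, which holds because $a-b\gg1$. So the weight $e^{-g(s)}$ is largest at $s=1$, and the factor $(1-s)^{-1/2}$ produces a square-root endpoint singularity there.

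First we localize. Substitute $s=1-t$, so that
\[
\frac{a}{s}-b=(a-b)+\frac{at}{1-t}.
\]
Set $c=a-b$ and $y=at/(1-t)$, which maps $t\in(0,1-v)$ onto $y\in(0,a(1-v)/v)$. Since $t=y/(a+y)$, we have $dt=a\,dy/(a+y)^2$, $1-s=y/(a+y)$ and $s=a/(a+y)$. Hence
\[
\frac{ds}{\sqrt{s(1-s)}}=\frac{\sqrt a\,dy}{\sqrt{y}\,(a+y)}.
\]
The integral becomes exactly
\[
\int_0^{a(1-v)/v}\frac{\sqrt a}{\sqrt y\,(a+y)}\,\frac{1}{c+y}\,e^{-\frac12(c+y)^2}\,dy .
\]
Next we factor out $e^{-c^2/2}$, which leaves the integrand $e^{-cy-y^2/2}$. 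Because $c\gg1$, the effective range is $y\lesssim 1/c$, where $y\ll1$. On that range we have $c+y=c(1+O(y/c))$ and $a+y=a(1+O(y/a))$, and $e^{-y^2/2}=1+O(y^2)$.

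The main term is therefore
\[
\frac{1}{\sqrt a\,c}e^{-c^2/2}\int_0^\infty y^{-1/2}e^{-cy}\,dy=\frac{\sqrt\pi}{\sqrt a\,c^{3/2}}e^{-c^2/2},
\]
since $\int_0^\infty y^{-1/2}e^{-cy}\,dy=\Gamma(1/2)c^{-1/2}$. This matches the claimed asymptotic.

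It remains to control the errors, which is the only real work.
\begin{enumerate}[(i)]
\item Replacing $(a+y)^{-1}$ by $a^{-1}$, $(c+y)^{-1}$ by $c^{-1}$, and $e^{-y^2/2}$ by $1$ costs relative errors bounded by $\int y^{1/2}e^{-cy}\,dy\big/\int y^{-1/2}e^{-cy}\,dy\asymp 1/c$ (and $\asymp 1/a$, $1/c^2$ respectively), all $o(1)$. For $y$ large these replacements are in fact upper bounds, since each factor is dominated by its value at $y=0$.
\item Truncating the upper limit $a(1-v)/v$ to $\infty$ costs the tail $\int_{Y}^\infty y^{-1/2}e^{-cy}\,dy$ with $Y=a(1-v)/v$. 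This tail is $O(c^{-1/2}e^{-cY/2})$ relative to the main term only if $cY\to\infty$. That holds here because $Y\gg1$ by assumption and $c\gg1$.
\end{enumerate}
I expect the main subtlety to be using the hypotheses $a,b\gg1$ and $a(1-v)/v\gg1$ correctly, so that the localization scale $1/c$ lies well inside the integration range. The positivity of $c+y$ throughout the range ensures that there is no interior maximum. Dominated convergence after the rescaling $y=w/c$ makes the argument clean: the integrand converges to $w^{-1/2}e^{-w}$ and is dominated by $w^{-1/2}e^{-w}$ itself.
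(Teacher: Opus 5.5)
Your proposal is correct and is essentially the paper's proof. Your variable $y=a/s-a$ is exactly the paper's $t=u-c$ (with $u=a/s-b$), so you obtain the same integral $\sqrt a\,e^{-c^2/2}\int_0^{a(1-v)/v}\frac{e^{-cy-y^2/2}}{\sqrt y\,(a+y)(c+y)}\,dy$ with the same leading value $\sqrt\pi/(ac^{3/2})$ for the $y$-integral. The only difference is the error control: the paper splits at $y=c^{-2/3}$ and bounds the tail by $e^{-c^{1/3}}/(ac^{5/3})$, whereas your global ratio bound (equivalently, dominated convergence after $y=w/c$, which needs only $cY\to\infty$) avoids the split. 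One harmless slip: $g(s)=\frac12(a/s-b)^2$ is decreasing in $s$ on $(v,1]$, not increasing, which is consistent with your correct conclusion that $e^{-g}$ peaks at $s=1$.
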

	\begin{proof}
		Make the change of variables
		\[
		u:=\frac{a}{s}-b.
		\]
		Then \(s=a/(u+b)\), and hence
		\[
		\begin{aligned}
			I:&=\int_{v}^{1}
			\frac{(\frac{a}{s}-b)^{-1}}{\sqrt{s(1-s)}}
			\exp\bigl\{-\frac12\bigl(\tfrac{a}{s}-b\bigr)^2\bigr\}\,ds\\
			&=\sqrt{a}\int_{a-b}^{\frac{a}{v}-b}
			\frac{1}{\sqrt{u+b-a}\,(u+b)u}
			e^{-u^2/2}\,du.
		\end{aligned}
		\]
		Define \(c=a-b\) and \(t=u-c\). It follows that
		\[
		I=\sqrt{a}\,e^{-c^2/2}
		\int_{0}^{\frac{a}{v}-b-c}
		\frac{1}{\sqrt{t}\,(t+a)(t+c)}
		e^{-t^2/2-ct}\,dt.
		\]
		Note that
		\[
		\frac{a}{v}-b-c
		=a\Bigl(\frac{1-v}{v}\Bigr)\gg 1,
		\]
		which is also much larger than \(c^{-2/3}\). We split the integral at \(t=c^{-2/3}\).
		
		For \(0<t<c^{-2/3}\), we have \(t^2< c^{-4/3}\ll 1\), and
		\[
		(t+a)(t+c)=ac(1+o(1)).
		\]
		Thus, with the substitution \(y=ct\),
		\[
		\begin{aligned}
			\int_{0}^{c^{-2/3}}
			\frac{1}{\sqrt{t}(t+a)(t+c)}
			e^{-t^2/2-ct}\,dt
			&=\frac{1+o(1)}{ac}
			\int_{0}^{c^{-2/3}}\frac{e^{-ct}}{\sqrt{t}}\,dt\\
			&=\frac{1+o(1)}{ac^{3/2}}
			\int_{0}^{c^{1/3}}\frac{e^{-y}}{\sqrt{y}}\,dy\\
			&=\frac{\sqrt{\pi}}{ac^{3/2}}(1+o(1)).
		\end{aligned}
		\]
		For the tail \(t>c^{-2/3}\), the monotonicity of the integrand gives
		\[
		\int_{c^{-2/3}}^{a(1-v)/v}
		\frac{1}{\sqrt{t}(t+a)(t+c)}
		e^{-t^2/2-ct}\,dt
		\lesssim
		\frac{1}{ac^{2/3}}
		\int_{c^{-2/3}}^{\infty}e^{-ct}\,dt
		\lesssim
		\frac{e^{-c^{1/3}}}{ac^{5/3}},
		\]
		which is negligible compared with the leading term \(\sqrt{\pi}/(ac^{3/2})\) since $c=a-b\gg 1$. Combining the two estimates yields
		\[
		I\sim\frac{\sqrt{\pi}}{\sqrt{a}(a-b)^{3/2}}
		\exp\{-\frac12(a-b)^2\},
		\]
		which finishes the lemma.
	\end{proof}

Next, we give some understanding on the unique solution $\lambda_n$ to the equation \eqref{equation}:  
$$ \frac{mq}{n}(\lambda-1-\log\lambda)-\frac12\log\lambda
 +\frac52\log(1-\lambda)
 =\log\frac{n^{3/2}q^{q+1/2}}
 {2m(q+1)\sqrt\pi\,\Gamma(q+3/2)e^q}.$$ 
 We first consider the case $$
1\ll q\ll m^{2/5}(\log m)^2.$$  

\begin{lem}\label{nlambda-lower}\label{lambda-bound}
Under the assumptions $
1\ll q\ll m^{2/5}(\log m)^2$, we have
\begin{equation}\label{eq:result}
m\lambda_n\gg m^{9/10},
\qquad
1\ll\sqrt q(1-\lambda_n)\lesssim\sqrt{\log m}.
\end{equation}
Moreover, the equation \eqref{equation}  can be written as
\begin{equation}\label{limequation}
q(\lambda_n-1-\log\lambda_n)
-\frac12\log\lambda_n
+\frac52\log(1-\lambda_n)
=
\frac12\log\frac{m}{8\pi^2q^3}+o(1).
\end{equation}
\end{lem}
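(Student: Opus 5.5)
First I rewrite the right-hand side of \eqref{equation} using Stirling's formula. Since $q\to\infty$,
\[
\log\Gamma(q+3/2)=(q+1)\log q-q+\tfrac12\log(2\pi)+o(1),
\]
so $q^{q+1/2}/(\Gamma(q+3/2)e^q)=(2\pi)^{-1/2}q^{-1/2}(1+o(1))$. Hence the right-hand side equals
\[
\log\frac{n^{3/2}}{2\sqrt2\,\pi\, m q^{3/2}}+o(1),
\]
where I used $q+1\sim q$. Since $q\ll m^{2/5}(\log m)^2\ll m$, we have $n/m\to1$, so this is $\tfrac12\log\frac{m}{8\pi^2q^3}+o(1)$. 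On the left, $mq/n=q+q^2/n$. This gives \eqref{limequation} with an extra term $\frac{q^2}{n}(\lambda_n-1-\log\lambda_n)$, which I must show is $o(1)$. That requires first locating $\lambda_n$.

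Write $f(\lambda)=\lambda-1-\log\lambda$ and $\delta=1-\lambda$. Near $\lambda=1$ we have $f\approx\delta^2/2$. I use monotonicity of the left-hand side and evaluate it at test points. The right-hand side is $R:=\frac12\log\frac{m}{8\pi^2q^3}$, which may be large positive or negative. For the lower bound $\sqrt q\,\delta\gg1$: take $\delta=K/\sqrt q$ with $K$ fixed. Then the left-hand side is about $(1+q/n)K^2/2+\frac52\log(K/\sqrt q)\approx K^2/2-\frac54\log q$. I compare this with $R$. If $R+\frac54\log q\to+\infty$, this is smaller than $R$ for every fixed $K$, forcing $\lambda_n<1-K/\sqrt q$. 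Now $R+\frac54\log q=\frac12\log m-\frac14\log q+O(1)\ge\frac12\log m-\frac1{10}\log m-O(\log\log m)\to\infty$, using $q\ll m^{2/5}(\log m)^2$. This gives $\sqrt q\,\delta\gg1$.

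For the upper bound: take $\delta=C\sqrt{\log m/q}$, which is $\le1/2$ since $q\gg1$ may be assumed comparable to or larger than $\log m$. If instead $q\lesssim\log m$, the bound $\sqrt q\,\delta\lesssim\sqrt{\log m}$ is trivial because $\delta<1$. With this $\delta$ we have $qf\ge c\,C^2\log m$, while the logarithmic terms are $O(\log\log m+\log q)$ and $R\le\frac12\log m$. So for large $C$ the left-hand side exceeds $R$, giving $\delta\le C\sqrt{\log m/q}$.

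For $m\lambda_n\gg m^{9/10}$, it suffices to show $\lambda_n$ stays away from $0$ in a quantitative sense. If $\lambda\le m^{-1/10}$, then $qf(\lambda)\ge\frac{q}{10}\log m-q$. Together with $-\frac12\log\lambda\ge0$ and $\frac52\log(1-\lambda)=o(1)$, this exceeds $R\le\frac12\log m$ once $q\ge 6$. So in fact $\lambda_n\ge m^{-1/10}$, and the left-hand-side structure with $q\gg1$ makes the bound strict in the needed sense. I expect that actually $\lambda_n\to1$ whenever $\log m\ll q$, but the stated crude bound suffices.

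Finally, the correction term is $\frac{q^2}{n}f(\lambda_n)$. I bound $f(\lambda_n)\lesssim\delta^2\lesssim\log m/q$ when $\delta\le1/2$, which gives $\frac{q^2}{n}f(\lambda_n)\lesssim q\log m/n\to0$. When $\delta$ is not small, $q\lesssim\log m$ and $f(\lambda_n)\lesssim\log m$, so the term is $\lesssim q^2\log m/n\to0$. This yields \eqref{limequation}.

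The main obstacle is bookkeeping the borderline regime where $q$ is close to $m^{2/5}(\log m)^2$. There $R$ becomes very negative and the lower bound on $\sqrt q\,\delta$ hinges on the precise exponent $\frac54$ versus $\frac32$. I would check this step most carefully.
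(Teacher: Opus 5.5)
Your proof is correct, but it is organized differently from the paper's. You bracket $\lambda_n$ by evaluating the monotone left-hand side at three separate test points: $1-K/\sqrt q$ for every fixed $K$, then $1-C\sqrt{\log m/q}$, then $m^{-1/10}$. You also split into cases according to whether $q\lesssim\log m$.

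The paper uses only the single test point $\lambda=1-q^{-1/2}$. Write $f(\lambda)=\lambda-1-\log\lambda$. From $1-\lambda_n>q^{-1/2}$ and $-\frac12\log\lambda_n\ge0$, it reads off the a priori bound $\frac{mq}{n}f(\lambda_n)\le\frac12\log m-\frac14\log q+O(1)=O(\log m)$. Everything else then follows at once, with no case distinction:
\begin{itemize}
\item $f\ge\frac12\delta^2$ gives $q\delta_n^2=O(\log m)$;
\item $-\log\lambda_n\le f(\lambda_n)+1$ gives $-\log\lambda_n=o(\log m)$, hence $m\lambda_n=m^{1-o(1)}$;
\item $(mq/n-q)f(\lambda_n)=\frac qn\cdot qf(\lambda_n)=O(q\log m/n)=o(1)$.
\end{itemize}
The paper then proves $\sqrt q\,\delta_n\to\infty$ by a separate contradiction argument. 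Your ``every fixed $K$'' test point replaces that step more directly. The paper's route buys economy: one inequality yields all three consequences.

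There is one loose end in your version. The test at $\lambda=m^{-1/10}$ only gives $\lambda_n\ge m^{-1/10}$, i.e.\ $m\lambda_n\ge m^{9/10}$. The statement requires $m\lambda_n\gg m^{9/10}$, and your remark about ``the left-hand-side structure'' does not supply the strictness. Your own argument fixes this immediately. For any fixed $\varepsilon\in(0,1/10)$, at $\lambda=m^{-\varepsilon}$ the left-hand side is at least $q(\varepsilon\log m-1)+o(1)$. This exceeds $\frac12\log m$ once $q$ is large, which holds eventually since $q\to\infty$. Hence $\lambda_n\ge m^{-\varepsilon}$ and $m\lambda_n\ge m^{1-\varepsilon}\gg m^{9/10}$.

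Finally, the ``main obstacle'' you flag is not delicate. In your notation, $R+\frac54\log q=\frac12\log m-\frac14\log q+O(1)$, and this diverges for every $q\ll m^2$. So there is ample margin near $q\asymp m^{2/5}(\log m)^2$.
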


\begin{proof}
Since $n\sim m$, Stirling's formula applied to
\eqref{equation} gives
\[
\frac{mq}{n}(\lambda_n-1-\log\lambda_n)
-\frac12\log\lambda_n
+\frac52\log(1-\lambda_n)
=
\frac12\log\frac{m}{8\pi^2q^3}+o(1).
\]
At $\lambda=1-q^{-1/2}$, the left-hand side equals
$-\frac54\log q+O(1)$, which is smaller than the
right-hand side for all sufficiently large $n$.
By monotonicity, $1-\lambda_n>q^{-1/2}$. Consequently,
\[
\frac{mq}{n}(\lambda_n-1-\log\lambda_n)
\le
\frac12\log m-\frac14\log q+O(1)
=O(\log m).
\]
Using
\[
\lambda-1-\log\lambda\ge\frac12(1-\lambda)^2,
\qquad
-\log\lambda\le\lambda-1-\log\lambda+1,
\]
we obtain
\[
q(1-\lambda_n)^2=O(\log m),
\qquad
-\log\lambda_n=O(1+\frac{\log m}{q})=o(\log m).
\]
The latter implies
$m\lambda_n=m^{1-o(1)}\gg m^{9/10}$.

It remains to show that $\sqrt q(1-\lambda_n)\to\infty$.
If this quantity were bounded along a subsequence,
expansion of the defining equation would give
\[
\frac12q(1-\lambda_n)^2
+\frac52\log\bigl(\sqrt q(1-\lambda_n)\bigr)
=
\frac12\log m-\frac14\log q+O(1).
\]
The left-hand side is bounded above, whereas the
right-hand side tends to infinity. This contradiction
proves \eqref{eq:result}.

Finally, since $m-n=q$ and
$q(\lambda_n-1-\log\lambda_n)=O(\log m)$, we have
\[
\bigl(\frac{mq}{n}-q\bigr)
(\lambda_n-1-\log\lambda_n)
=
O\bigl(\frac{q\log m}{n}\bigr)
=o(1).
\]
Thus replacing $mq/n$ by $q$ in the defining equation
introduces only an $o(1)$ error, proving
\eqref{limequation}. The proof is completed. 
\end{proof}

Now we discuss the regime \(m^{2/5}(\log m)^2\lesssim q<m\). 

Set
\[
\gamma_n=\frac{mq}{n}(1-\lambda_n)^2,
\qquad
s_n=\frac{mn}{q}
\]
such that 
\[
\lambda_n=1-\sqrt{\frac{n\gamma_n}{mq}}.
\]
We start with the asymptotic of \(\gamma_n\).

\begin{lem}\label{unified-gaussian}
Let $s_n$ and $\gamma_n$ be defined as above. For the case \(m^{2/5}(\log m)^2\lesssim q<m,\) we have
\begin{equation}\label{gamma-order}
    \gamma_n\sim\frac12\log s_n,
\end{equation}
and 
\begin{equation}\label{gamma-match}
    \frac{s_n^{1/4}e^{-\gamma_n/2}}
     {2\sqrt2\pi\gamma_n^{5/4}}
=1+o(\gamma_n^{-1}).
\end{equation}
\end{lem}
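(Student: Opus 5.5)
Writing $\lambda_n=1-\sqrt{n\gamma_n/(mq)}$, the plan is to substitute this into \eqref{equation} and expand both sides. On the right-hand side I will use Stirling's formula $\Gamma(q+3/2)=\sqrt{2\pi}\,q^{q+1}e^{-q}(1+O(q^{-1}))$, which gives
\[
\log\frac{n^{3/2}q^{q+1/2}}{2m(q+1)\sqrt\pi\,\Gamma(q+3/2)e^q}
=\log\frac{n^{3/2}}{2\sqrt2\,\pi\, m q^{3/2}}+O(q^{-1}).
\]
On the left, set $\varepsilon=1-\lambda_n=\sqrt{n\gamma_n/(mq)}$. The Taylor expansion $\lambda-1-\log\lambda=\varepsilon^2/2+\varepsilon^3/3+O(\varepsilon^4)$ gives $\frac{mq}{n}(\lambda_n-1-\log\lambda_n)=\gamma_n/2+O(\gamma_n\varepsilon)$. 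Next, $-\frac12\log\lambda_n=O(\varepsilon)$, and
\[
\frac52\log(1-\lambda_n)=\frac54\log\gamma_n+\frac54\log\frac{n}{mq}.
\]
Since $\frac54\log\frac nm-\frac32\log n+\log m+\frac32\log q+\frac54\log\frac1q=\frac14\log\frac{mn}{q}+O(q/m)$, and since $n/m$ enters only through $s_n$ up to such terms (to be checked carefully), the equation should become
\[
\frac{\gamma_n}{2}+\frac54\log\gamma_n-\frac14\log s_n+\log(2\sqrt2\pi)=O(\gamma_n\varepsilon)+O(\varepsilon)+O(q^{-1}).
\]
This is exactly $-\log$ of the quantity in \eqref{gamma-match}.

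The first step is a priori bounds. Using the monotonicity of the left-hand side as in Lemma \ref{lambda-bound}, I will evaluate it at trial points $\lambda=1-\sqrt{nc\log s_n/(mq)}$ with $c=1/2\pm\delta$. This shows $\gamma_n\asymp\log s_n$, and in particular that $\varepsilon\to0$ (equivalently $q\gg\log s_n$ under the regime's assumptions, since $s_n\le m^2$ and $q\gtrsim m^{2/5}$). Feeding these bounds into the expanded equation gives $\gamma_n/2=\frac14\log s_n-\frac54\log\gamma_n+O(1)$, hence $\gamma_n\sim\frac12\log s_n$, which is \eqref{gamma-order}.

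The second step proves \eqref{gamma-match}. It requires the right-hand error to be $o(\gamma_n^{-1})=o(1/\log s_n)$, and this is the main obstacle. The terms $O(q^{-1})$ and $O(\varepsilon)$ are harmless because $q\gtrsim m^{2/5}$. The term $\gamma_n\varepsilon\asymp(\log s_n)^{3/2}\sqrt{n/(mq)}$, however, needs $\sqrt{mq/n}\gg(\log s_n)^{5/2}$. When $n\sim m$ this is $q\gg(\log m)^5$, which the regime $q\gtrsim m^{2/5}(\log m)^2$ amply gives; when $n\ll m$ the factor $m/n$ only helps. I will handle the case $q/m$ not small separately, since there the $O(q/m)$ terms from replacing $n/m$ are not small. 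I expect the bookkeeping of exact (rather than Stirling-approximated) $m,n,q$ factors to work out so that the $n/m$ dependence is absorbed entirely into $s_n$. If a leftover term such as $\frac{mq}{n}-q$ appears, I will bound it using $\varepsilon^3$-order smallness as in the last step of Lemma \ref{lambda-bound}.
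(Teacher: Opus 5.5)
Your proposal is correct and takes essentially the same route as the paper. You substitute $\lambda=1-\sqrt{nt/(mq)}$, bracket $\gamma_n$ between trial values of order $\log s_n$ using monotonicity, then read off \eqref{gamma-match} from the expanded equation, whose error $O\bigl((\log s_n)^{3/2}q^{-1/2}+q^{-1}\bigr)$ is $o(1/\log s_n)$.

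One simplification: the logarithmic terms combine exactly, $\frac54\log\frac{n}{mq}-\log\frac{n^{3/2}}{mq^{3/2}}=-\frac14\log s_n$. So there is no $O(q/m)$ remainder and no separate case for $q\asymp m$ is needed. Your intermediate identity also has a sign slip: it should be $-\frac14\log\frac{mn}{q}$.
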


\begin{proof}
Let $F_n(t)$ denote the left-hand side of \eqref{equation}
minus its right-hand side after the substitution
\[
\lambda=1-\sqrt{\frac{nt}{mq}},
\qquad 0<t<\frac{mq}{n}.
\]
By the definition of $\gamma_n$, the unique solution $\lambda_n$ to the equation \eqref{equation} 
corresponds exactly to $t=\gamma_n$. Thus
\[
F_n(\gamma_n)=0.
\]
Moreover, $F_n$ is strictly increasing, so $\gamma_n$ is its
unique zero.

Applying Stirling's formula to $\Gamma(q+\frac32),$ since $1\ll q$, the right-hand side of
\eqref{equation} equals
\begin{equation}\label{U-stirling}
\frac12\log\frac{n^3}{8m^2q^3\pi^2}
-\frac{35}{24q}+O(q^{-2}).
\end{equation}
For
\[
\frac14\log s_n\le t\le\log s_n,
\]
we have
\[
\sqrt{\frac{nt}{mq}}
\le\sqrt{\frac{\log s_n}{q}}
=o(1).
\]
Taylor expansion of the left-hand side of
\eqref{equation} therefore gives
\[
\frac t2+\frac54\log\frac{nt}{mq}
+O\Bigl(
\frac{t^{3/2}\sqrt n}{\sqrt{mq}}
+\sqrt{\frac{tn}{mq}}
\Bigr),
\]
uniformly on this interval. Combining this with
\eqref{U-stirling} and using $s_n=mn/q$, we obtain
\[
2F_n(t)
=t+\frac52\log t-\frac12\log s_n+\log(8\pi^2)
+O\Bigl(
\frac{t^{3/2}\sqrt n}{\sqrt{mq}}
+\sqrt{\frac{tn}{mq}}+\frac1q
\Bigr).
\]
This expansion holds throughout the stated interval;
we have not yet used the identity $F_n(\gamma_n)=0$.

Since $mq/n\ge q$ and $s_n\le m^2$, the assumptions
of this section imply
\[
\frac{(\log s_n)^{3/2}}{\sqrt q}+\frac1q
=o((\log s_n)^{-1}).
\]
In particular, the error in the expansion of $2F_n(t)$
is uniformly $o(1)$. At the two endpoints,
\[
2F_n\Bigl(\frac14\log s_n\Bigr)
=-\frac14\log s_n+O(\log\log s_n)<0
\]
and
\[
2F_n(\log s_n)
=\frac12\log s_n+O(\log\log s_n)>0
\]
for all sufficiently large $n$.
Since $F_n$ is strictly increasing and
$F_n(\gamma_n)=0$, it follows that
\[
\frac14\log s_n<\gamma_n<\log s_n.
\]

The expansion above thus applies at $t=\gamma_n$.
Using $F_n(\gamma_n)=0$ and the sharper error bound,
we obtain
\[
\gamma_n+\frac52\log\gamma_n
=\frac12\log s_n-\log(8\pi^2)
+o((\log s_n)^{-1}).
\]
The bounds on $\gamma_n$ imply
$\log\gamma_n=O(\log\log s_n)$, and hence
\[
\gamma_n=\frac12\log s_n+O(\log\log s_n).
\]
This proves \eqref{gamma-order}.

Finally, rearranging the same identity and using
$\gamma_n\asymp\log s_n$, we find
\[
\begin{aligned}
\log\Bigl(
\frac{s_n^{1/4}e^{-\gamma_n/2}}
{2\sqrt2\pi\gamma_n^{5/4}}
\Bigr)
=\frac14\log s_n-\frac{\gamma_n}{2}
-\frac54\log\gamma_n-\frac12\log(8\pi^2)=o(\gamma_n^{-1}).
\end{aligned}\]
Exponentiating yields \eqref{gamma-match}.
\end{proof} 
The following lemma provides a bound used to control the error terms
in the subsequent estimates.

\begin{lem}\label{snmnq}
	Under the assumption \(m^{2/5}(\log m)^2\lesssim q<m,\) uniformly for
	\(0\le a\le 1/8\), we have
	\[
	s_n^a\sqrt{\frac{m}{nq}}=o(1).
	\]
\end{lem}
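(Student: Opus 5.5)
Since $s_n^a\ge 1$ is increasing in $a$, it suffices to treat the endpoint $a=1/8$; uniformity over $0\le a\le 1/8$ is then automatic. So the claim reduces to
\[
s_n^{1/8}\sqrt{\frac{m}{nq}}=\Bigl(\frac{mn}{q}\Bigr)^{1/8}\Bigl(\frac{m}{nq}\Bigr)^{1/2}
=\frac{m^{5/8}}{n^{3/8}q^{5/8}}\longrightarrow 0 .
\]
I will bound this quantity separately according to the size of $n$ relative to $m$. The assumption is $q\ge c\,m^{2/5}(\log m)^2$ for some $c>0$, and $n\to\infty$ throughout.

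\emph{Case $n\ge m/2$.} Here $n\asymp m$, so the expression is of order
\[
\frac{m^{1/4}}{q^{5/8}}\lesssim \frac{m^{1/4}}{m^{1/4}(\log m)^{5/4}}=(\log m)^{-5/4}\to0 .
\]
The exponent $2/5$ in the hypothesis is exactly what makes $q^{5/8}\gtrsim m^{1/4}$, and the factor $(\log m)^2$ supplies the decay.

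\emph{Case $n<m/2$.} Here $q=m-n>m/2$, so $q\asymp m$, and the expression is of order $n^{-3/8}\to0$ because $n\to\infty$.

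The two cases together give the bound for every sequence with $n\to\infty$. In particular, no assumption of the form $n\asymp m$ is needed, which matters because the regime $m^{2/5}(\log m)^2\lesssim q<m$ includes $n/m\to0$. Applying monotonicity in $a$ then yields the uniform statement. The only step requiring care is not implicitly assuming $n\sim m$, and the case split handles it.
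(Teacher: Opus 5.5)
Your proposal is correct and follows the paper's proof: the paper uses the same case split ($q\ge m/2$, giving $\lesssim n^{-3/8}$, versus $q<m/2$, where $n\asymp m$ and the hypothesis $q\gtrsim m^{2/5}(\log m)^2$ gives decay like a negative power of $\log m$). The only cosmetic difference is that you first reduce to $a=1/8$ via monotonicity of $s_n^a$ (valid since $s_n=mn/q\ge n\ge1$), whereas the paper carries general $a$ through both bounds and uses $a\le 1/8$ at the end.
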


\begin{proof}
	Recall that \(s_n=mn/q\) and \(m=n+q\). Thus
	\[
	s_n^a\sqrt{\frac{m}{nq}}
	=
	\frac{m^{a+1/2}}{n^{1/2-a}q^{a+1/2}}.
	\]
	If \(q\ge m/2\), then
	\[
	s_n^a\sqrt{\frac{m}{nq}}
	\lesssim n^{a-1/2}
	\le n^{-3/8}
	=o(1).
	\]
	If \(q<m/2\), then \(n>m/2\). Using
	\(q\gtrsim m^{2/5}(\log m)^2\), we obtain
	\[
	s_n^a\sqrt{\frac{m}{nq}}
	\lesssim m^{2a}q^{-a-1/2}
	\lesssim
	m^{(8a-1)/5}(\log m)^{-2a-1}
	\le (\log m)^{-1}
	=o(1).
	\]
\end{proof}

Next, we continue to present a lemma when 
\(m^{2/5}(\log m)^2\lesssim q<m.\) 

\begin{lem}\label{T1-T2}
	For fixed $x\in\mathbb R$ and $1\le j\le i_n:=\lfloor s_n^{8/15}\rfloor$, set
	$$ T_1=\frac{(m-j)^{3/2}L_n(x)}{\sqrt{(n-j)q}},
	\qquad
	T_2=\sqrt{\frac{(m-j)(n-j)}q}$$
and $$u_n(j, x)=\frac{j-1}{\sqrt{s_n}}+\sqrt{\gamma_n}+\frac{x}{\sqrt{\gamma_n}}.$$
When 
\(m^{2/5}(\log m)^2\lesssim q<m,\) we have 
$$
T_1\sim\sqrt{s_n},
\qquad
T_1-T_2\sim u_n(j,x),
\qquad
(T_1-T_2)^2-u_n^2(j,x)=o(1).
$$
%In particular,
%$$
%(T_1-T_2)^3\sqrt{\frac{m}{nq}}
%=o(1).
%$$
\end{lem}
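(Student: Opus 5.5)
The plan is to expand $T_1$ and $T_2$ around $j=0$ and compare them term by term. Three ingredients come from earlier in the paper: the identities $A_n=1-q\lambda_n/m$ and $1-\lambda_n=\sqrt{n\gamma_n/(mq)}$; the asymptotics $\gamma_n\sim\frac12\log s_n$ from Lemma~\ref{unified-gaussian}; and the smallness bound $s_n^a\sqrt{m/(nq)}=o(1)$ from Lemma~\ref{snmnq}.

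\textbf{The case $j=0$.} Since $m^{3/2}/\sqrt{nq}=\sqrt{s_n}\,m/n$, we have
\[
T_1\big|_{j=0}=\sqrt{s_n}\,\frac{m}{n}\,L_n(x),\qquad T_2\big|_{j=0}=\sqrt{s_n}.
\]
Using $m-q\lambda_n=n+q(1-\lambda_n)$,
\[
\frac mn A_n=1+\frac{q(1-\lambda_n)}{n},\qquad
\sqrt{s_n}\,\frac{q(1-\lambda_n)}{n}=\sqrt{\frac{mq}{n}}(1-\lambda_n)=\sqrt{\gamma_n}.
\]
For the $x$-term,
\[
\sqrt{s_n}\,\frac mn B_nx=\frac mn A_n\cdot\frac{\lambda_n}{1-\lambda_n}\cdot\frac{\sqrt{s_n}}{m}\,x .
\]
Inserting $1-\lambda_n=\sqrt{n\gamma_n/(mq)}$ gives $\frac{\sqrt{s_n}}{m(1-\lambda_n)}=\frac{1}{\sqrt{\gamma_n}}$. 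Moreover $\lambda_n\to1$, and $\frac mn A_n=1+\sqrt{\gamma_n}/\sqrt{s_n}=1+o(1)$. So this term equals $x/\sqrt{\gamma_n}\,(1+O(\sqrt{\gamma_n/s_n})+O(1-\lambda_n))$.

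\textbf{The $j$-dependence.} Write $\log T_1-\log T_2$ and its $j$-derivatives. The first $j$-derivatives are
\[
\frac{d}{dj}\log T_1=-\frac{3}{2(m-j)}+\frac1{2(n-j)},\qquad
\frac{d}{dj}\log T_2=-\frac1{2(m-j)}-\frac1{2(n-j)}.
\]
Their difference is $\frac1{n-j}-\frac1{m-j}=\frac{q}{(m-j)(n-j)}$. Multiplied by $T_2\approx\sqrt{s_n}$, this produces the increment $1/\sqrt{s_n}$ per unit of $j$, i.e.\ the term $(j-1)/\sqrt{s_n}$ up to an $O(1/\sqrt{s_n})$ shift that is absorbed.

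Concretely, I would write exactly
\[
T_1-T_2=T_2\Bigl(\frac{T_1}{T_2}-1\Bigr),\qquad
\frac{T_1}{T_2}=\frac{m-j}{n-j}\,L_n(x)=\Bigl(1+\frac{q}{n-j}\Bigr)L_n(x).
\]
Then
\[
\frac{T_1}{T_2}-1=\frac{q}{n-j}L_n(x)-(1-L_n(x)),
\]
and this is an exact algebraic identity with no Taylor series needed. From here one reads off the main term
\[
\frac{q}{n}-\frac{q\lambda_n}{m}+\frac{q j}{n^2}+B_nx,
\]
multiplied by $T_2=\sqrt{s_n}\,(1+O(j/n))$.

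\textbf{Error control, the main obstacle.} The real work is showing the errors are $o(1/u_n)$, which is what the squared statement $(T_1-T_2)^2-u_n^2=o(1)$ requires. The error terms are:
\begin{itemize}
\item cross terms $\sqrt{\gamma_n}\,j/n$;
\item quadratic terms $\sqrt{s_n}\,qj^2/n^3$;
\item the relative errors $O(1-\lambda_n)$ and $O(\sqrt{\gamma_n/s_n})$ in the $x$-coefficient.
\end{itemize}
Multiplied by $u_n\lesssim j/\sqrt{s_n}+\sqrt{\log s_n}$, each must be $o(1)$ uniformly for $j\le s_n^{8/15}$.

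I would reduce each bound to the form $s_n^a\sqrt{m/(nq)}$ with $a\le 1/8$, possibly times powers of $\log s_n$, and then invoke Lemma~\ref{snmnq}. For example, $1-\lambda_n=\sqrt{\gamma_n}\sqrt{n/(mq)}\le\sqrt{\gamma_n}\sqrt{m/(nq)}$. The delicate step is choosing the exponent bookkeeping so that $j^2/s_n\le s_n^{1/15}$ times these small factors still vanishes. This is presumably why the cutoff is $i_n=s_n^{8/15}$, and I expect this to be the main obstacle.

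Finally, $T_1\sim\sqrt{s_n}$ follows because $T_1/T_2\to1$ and $T_2\sim\sqrt{s_n}$ uniformly, since $j\ll n$.
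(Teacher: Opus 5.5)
\textbf{The step that fails: your ``main term.''} Your overall route matches the paper's: the exact factorization $T_1-T_2=T_2(T_1/T_2-1)$, an expansion in $j/n$, and reduction of every error to Lemma~\ref{snmnq}. Your $j=0$ computation is also correct. The problem is the main term you read off from $T_1/T_2-1=\frac{q}{n-j}L_n(x)-(1-L_n(x))$.

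Writing $\frac qn+\frac{qj}{n^2}$ for $\frac{q}{n-j}L_n(x)$ amounts to replacing $L_n(x)$ by $1$. But $L_n(x)\sim n/m$, which stays away from $1$ when $q\asymp m$. The discarded piece is
\[
T_2\,\frac{q}{n-j}\bigl(L_n(x)-1\bigr)=T_2\,\frac{q}{n-j}\Bigl(-\frac{q\lambda_n}{m}+B_nx\Bigr)\approx-\frac{q^{3/2}\lambda_n}{\sqrt{mn}}+\frac qm\cdot\frac{x}{\sqrt{\gamma_n}} .
\]
It is not in your error list, and it is not negligible on the whole regime:
\begin{itemize}
\item the first part is $o(\gamma_n^{-1/2})$ only when $q\ll m^{2/3}(\log m)^{-1/3}$, and it is of order $\sqrt m$ when $q\asymp m$;
\item the second part shifts $(T_1-T_2)^2$ by about $2xq/m$, which is not $o(1)$ when $q\asymp m$.
\end{itemize}
Equivalently, your main term has $j$-coefficient $\sqrt{s_n}\,q/n^2=(m/n)s_n^{-1/2}$ instead of $s_n^{-1/2}$, which contradicts your own log-derivative heuristic. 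It also disagrees with your correct $j=0$ formula $\frac mnL_n(x)-1=\frac{q(1-\lambda_n)}{n}+\frac mnB_nx$.

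\textbf{The fix.} Center $L_n(x)$ at $\frac{n-j}{m-j}$ instead of at $1$, as the paper does. We have $\frac{T_1}{T_2}-1=\frac{m-j}{n-j}\bigl(L_n(x)-\frac{n-j}{m-j}\bigr)$ and, exactly, $L_n(x)-\frac{n-j}{m-j}=\frac{q(1-\lambda_n)}{m}+\frac{jq}{m(m-j)}+B_nx$. Together these give the exact formula
\[
T_1-T_2=\frac{j}{\sqrt{s_n}}\sqrt{\frac{1-j/m}{1-j/n}}+\frac{(1-j/m)^{3/2}}{\sqrt{1-j/n}}\Bigl(\sqrt{\gamma_n}+\frac{x}{\sqrt{\gamma_n}}\cdot\frac{mA_n\lambda_n}{n}\Bigr),
\]
in which all $j$-dependence sits in factors $1+O(j/n)$.

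Now use $\frac{mA_n\lambda_n}{n}=\lambda_n(1+\sqrt{\gamma_n/s_n})$ and $(1-\lambda_n)/\sqrt{\gamma_n}=\sqrt{n/(mq)}$. This yields
\[
T_1-T_2=u_n(j,x)+O\bigl(u_n(j,x)\,i_n/n\bigr)+O\bigl(s_n^{-1/2}+\sqrt{n/(mq)}\bigr).
\]

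The bookkeeping you flagged as the main obstacle is then short. For $j\le i_n$ we have $u_n(j,x)\lesssim s_n^{1/30}$, so
\[
\bigl|(T_1-T_2)^2-u_n^2(j,x)\bigr|\lesssim\frac{s_n^{3/5}}{n}+s_n^{-7/15}+s_n^{1/30}\sqrt{\frac{m}{nq}},\qquad\frac{s_n^{3/5}}{n}=s_n^{1/10}\sqrt{\frac{m}{nq}}.
\]
Lemma~\ref{snmnq} with $a=1/10$ and $a=1/30$ then concludes. The case $a=1/30$ also gives $i_n/n\le s_n^{1/30}\sqrt{m/(nq)}=o(1)$, which you used without justification.
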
 

\begin{proof} We know from Lemma \ref{unified-gaussian} that the normalizing constants satisfy
\begin{equation}\label{An}
	A_n=1-\frac{q\lambda_n}{m}
	=\frac{n}{m}+\frac{\sqrt{n q\,\gamma_n}}{m^{3/2}}
	=\frac{n}{m}\bigl(1+O\bigl(\tfrac{\sqrt{\log s_n}}{\sqrt{s_n}}\bigr)\bigr),
\end{equation}
and
\begin{equation}\label{Bn}
	B_n=\frac{A_n\lambda_n}{m(1-\lambda_n)}
	=\frac{\sqrt{n q}}{m^{3/2}\sqrt{\gamma_n}}
	+O(m^{-1}\sqrt{\log s_n}).
\end{equation} 
%Since $\frac{i_n}{n}\asymp \frac{m^{8/15}}{n^{7/15}q^{8/15}},$ when $q\asymp m$ we see 
%$\frac{i_n}{n}\asymp n^{-7/15}$ and meanwhile if $m^2(\log m)^2\lesssim q\ll m,$ we know $n\sim m$ and then  
%\[
%\frac{i_n}{n}\asymp \frac{m^{1/15}}{q^{8/15}}\ll \frac{m^{1/15}}{m^{16/75}}.
%\]
%Thus, we always have 
%\begin{equation}\label{ino1}
%\frac{i_n}{n}\ll 1.	
%\end{equation}
By Lemma \ref{snmnq} with \(a=1/30\),
$$	\frac{i_n}{n}
\le \frac{s_n^{8/15}}{n}
=s_n^{1/30}\sqrt{\frac{m}{nq}}
=o(1). $$
Then, 
$$	L_n(x)=A_n+B_n x\sim \frac{n}{m} $$
 and consequently 
$$T_1\sim \frac{m^{3/2}}{\sqrt{nq}}\times \frac{n}{m}\sim \frac{\sqrt{mn}}{\sqrt{q}}=\sqrt{s_n}$$ uniformly on 
$1\le j\le i_n.$  

It holds 
\[
\begin{aligned}
T_1-T_2
&=\frac{(m-j)^{3/2}}{\sqrt{(n-j)q}}
\Bigl(L_n(x)-\frac{n-j}{m-j}\Bigr)\\
&=\frac{j}{\sqrt{s_n}}
\sqrt{\frac{1-j/m}{1-j/n}}
+\frac{(1-j/m)^{3/2}}{\sqrt{1-j/n}}
\bigl(
\sqrt{\gamma_n}
+\frac{x}{\sqrt{\gamma_n}}
\frac{mA_n\lambda_n}{n}
\bigr).
\end{aligned}
\] 
Since $j\le i_n=o(n)$, both factors involving $j/m$ and $j/n$
equal $1+O(i_n/n)$. Consequently,
\[
T_1-T_2
=u_n(j,x)
+O\bigl(u_n(j,x)\frac{i_n}{n}\bigr)
+O_x\bigl(
s_n^{-1/2}+\sqrt{\frac{n}{mq}}
\bigr).
\]
This verifies $T_1-T_2\sim u_n(j,x)$. More precisely,
\[
\begin{aligned}
|(T_1-T_2)^2-u_n^2(j,x)|
&\lesssim
u_n^2(j,x)\frac{i_n}{n}
+u_n(j,x)
\bigl(s_n^{-1/2}+\sqrt{\frac{n}{mq}}\bigr)\\
&\lesssim
\frac{s_n^{3/5}}{n}
+s_n^{-7/15}
+s_n^{1/30}\sqrt{\frac{m}{nq}}.
\end{aligned}
\]
%By definition, 
%$$\frac{s_n^{3/5}}{n}=\frac{m^{3/5}}{n^{2/5}q^{3/5}}=s_n^{1/10}\sqrt{\frac{m}{nq}},$$
%whence 
%$$|(T_1-T_2)^2-u_n^2(j,x)|\le \frac{m^{3/5}}{n^{2/5}q^{3/5}}+s_n^{-7/15}=o(1).$$ 
%Here, we use the same  argument as for \eqref{ino1} to get $\frac{m^{3/5}}{n^{2/5}q^{3/5}}=o(1).$
%Thus 
% $$(T_1-T_2)^2-u_n^2(j,x)=o(1).$$
%In particular,
%\[
%(T_1-T_2)^3\sqrt{\frac{m}{nq}}
%\lesssim \frac{i_n^3}{s_n^{3/2}}\sqrt{\frac{m}{nq}}\lesssim  \sqrt{\frac{m}{nq}}s_n^{1/10}=\frac{m^{3/5}}{n^{2/5}q^{3/5}}=o(1).
%\]
%The proof is completed. 
Since
\[
\frac{s_n^{3/5}}{n}
=s_n^{1/10}\sqrt{\frac{m}{nq}},
\]
Lemma \ref{snmnq}, applied with \(a=1/10\) and \(a=1/30\), gives
\[
(T_1-T_2)^2-u_n^2(j,x)=o(1)
\]
uniformly for \(1\le j\le i_n\). The proof is completed now. 
%Finally, \(T_1-T_2\asymp u_n(j,x)\lesssim s_n^{1/30}\), so
%\[
%(T_1-T_2)^3\sqrt{\frac{m}{nq}}
%\lesssim s_n^{1/10}\sqrt{\frac{m}{nq}}
%=o(1),
%\]
%again by Lemma \ref{snmnq}.
\end{proof}

We finish this section with the following estimates, which are borrowed from \cite{MaMeng2026}.

\begin{lem}\label{sum1}
	The following two estimates hold.
	\begin{enumerate}
		\item[(i)] For any fixed \(r\in\mathbb{R}\) and \(z\gg 1\),
		\[
		\int_{z}^{+\infty}t^{r} e^{-t^2/2}\,dt
		=
		z^{r-1}e^{-z^2/2}
		(1+\frac{r-1}{z^2}+O(z^{-4})).
		\]
		
		\item[(ii)] 
		Let $$u_n(j,x)=\frac{j-1}{\sqrt{s_n}}+\sqrt{\gamma_n}+\frac{x}{\sqrt{\gamma_n}}.$$ For any \(L\ge 1\) and \(x\in\mathbb{R}\) such that
		\[
		1\ll u_n(L,x)\ll s_n^{1/2},
		\]
		and any \(c>0\), we have
		\[
		\sum_{j=L}^{+\infty}u_n^{-c}(j,x)e^{-u_n^2(j,x)/2}
		\sim
		\frac{\sqrt{s_n}}{u_n^{c+1}(L,x)}e^{-u_n^2(L,x)/2}.
		\]
	\end{enumerate}
\end{lem}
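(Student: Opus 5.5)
Part (i) is a Gaussian tail estimate, and I would prove it by integration by parts. Write $t^{r}e^{-t^2/2}=t^{r-1}\cdot t e^{-t^2/2}$ and integrate by parts twice:
\[
\int_z^\infty t^r e^{-t^2/2}\,dt=z^{r-1}e^{-z^2/2}+(r-1)\int_z^\infty t^{r-2}e^{-t^2/2}\,dt,
\]
and the same identity holds with $r$ replaced by $r-2$. This gives
\[
z^{r-1}e^{-z^2/2}\Bigl(1+\frac{r-1}{z^2}\Bigr)+(r-1)(r-3)\int_z^\infty t^{r-4}e^{-t^2/2}\,dt .
\]
The remainder integral is at most $z^{r-5}e^{-z^2/2}(1+o(1))$ by one more integration by parts, or by the crude bound $t^{r-4}\le C z^{r-4}(t/z)^{\max(r-4,0)}$ combined with the first step. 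Relative to the main term this is $O(z^{-4})$, which proves (i).

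For part (ii) I would compare the sum with an integral. Set $h=1/\sqrt{s_n}$, so that $u_n(j,x)=u_n(L,x)+(j-L)h$ is an arithmetic progression with step $h$. Put $f(u)=u^{-c}e^{-u^2/2}$. For $u\ge u_n(L,x)\gg1$ this function is decreasing, so
\[
\frac1h\int_{u_n(L,x)}^\infty f\,du\;\le\;\sum_{j\ge L}f(u_n(j,x))\;\le\; f(u_n(L,x))+\frac1h\int_{u_n(L,x)}^\infty f\,du .
\]
By part (i) with $r=-c$,
\[
\int_{u_n(L,x)}^\infty f\,du\sim u_n(L,x)^{-c-1}e^{-u_n^2(L,x)/2}.
\]
Multiplying by $1/h=\sqrt{s_n}$ gives exactly the claimed main term $\sqrt{s_n}\,u_n^{-c-1}(L,x)e^{-u_n^2(L,x)/2}$.

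The remaining point, which I expect to be the only real subtlety, is that the boundary term $f(u_n(L,x))$ must be negligible. Its ratio to the main term is $u_n(L,x)/\sqrt{s_n}$, and this tends to $0$ precisely by the hypothesis $u_n(L,x)\ll s_n^{1/2}$. Equivalently, the step size $h$ is much smaller than the decay scale $1/u$ of the Gaussian, so consecutive terms barely change and the Riemann-sum approximation is accurate. The other hypothesis, $u_n(L,x)\gg1$, is what makes part (i) and the monotonicity of $f$ available. Combining the two bounds gives (ii).
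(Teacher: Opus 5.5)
Your proposal is correct. The paper gives no proof of this lemma; it only says the estimates are borrowed from the authors' earlier work. So there is no in-paper argument to compare against, and your argument supplies a self-contained proof along the standard route.

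For (i), two integrations by parts give the main term and the $(r-1)/z^2$ correction. For (ii), you use a monotone sum--integral comparison on the arithmetic progression $u_n(j,x)=u_n(L,x)+(j-L)s_n^{-1/2}$. This shows exactly where each hypothesis enters. The condition $u_n(L,x)\gg1$ feeds part (i) with $r=-c$. The condition $u_n(L,x)\ll s_n^{1/2}$ kills the boundary term, whose relative size is exactly $u_n(L,x)/\sqrt{s_n}$. As a by-product you get the explicit relative error $O\bigl(u_n(L,x)^{-2}+u_n(L,x)/\sqrt{s_n}\bigr)$ in (ii).

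One small point in (i) should be tightened. When $r>5$, a single further integration by parts produces $(r-5)\int_z^\infty t^{r-6}e^{-t^2/2}\,dt$ with a positive coefficient. Your ``crude bound'' $t^{r-4}\le Cz^{r-4}(t/z)^{\max(r-4,0)}$ is an identity when $r>4$, so it does not help by itself. Close the loop using $\int_z^\infty t^{r-6}e^{-t^2/2}\,dt\le z^{-2}\int_z^\infty t^{r-4}e^{-t^2/2}\,dt$, which holds because $t\ge z$. This yields $\int_z^\infty t^{r-4}e^{-t^2/2}\,dt\le z^{r-5}e^{-z^2/2}\bigl(1-(r-5)z^{-2}\bigr)^{-1}$. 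Alternatively, L'H\^{o}pital's rule gives $\int_z^\infty t^{s}e^{-t^2/2}\,dt\sim z^{s-1}e^{-z^2/2}$ for every fixed real $s$.
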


\section{The regime \(m^{2/5}(\log m)^2\lesssim q<m\)}

Remember our task is on both ${\rm Tr}(M(x))$ and $\|M(x)\|_{\rm HS}.$ We prepare precise asymptotic for $M_{j, j}(x)$ and proper upper bounds for $M_{j, k}(x)$ for $j\neq k.$  

\begin{lem}\label{beta-tail}
Let \(i_n\) and \(u_n(j,x)\) be as above, and let \(\tau(s)=\frac{T_1}{s}-T_2\).
Suppose \(Y_{n-j}\) is a random variable obeying the distribution \(\operatorname{Beta}(n-j+1,q)\).
Then
\begin{equation}\label{beta-tail-local}
\mathbb P\Bigl(Y_{n-j}>\frac{L_n(x)}s\Bigr)
=
\frac{1+o(1)}{\sqrt{2\pi}\,\tau(s)}
e^{-\tau^2(s)/2}
\end{equation}
uniformly for $1\le j\le i_n$ and
$L_n(x)\le s\le1$ such that $\tau(s)\le2\tau(1)$.
For $\tau(s)>2\tau(1)$,
\begin{equation}\label{beta-tail-far}
\mathbb P\Bigl(Y_{n-j}>\frac{L_n(x)}s\Bigr)
\lesssim \frac{e^{-2\tau^2(1)}}{\tau(1)}.
\end{equation}
In particular,
\begin{equation}\label{Pv}
\mathbb P(Y_{n-j}>L_n(x))
=
\frac{1+o(1)}{\sqrt{2\pi}\,u_n(j,x)}
e^{-u_n^2(j,x)/2},
\end{equation}
uniformly for $1\le j\le i_n$.
\end{lem}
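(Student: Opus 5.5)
The plan is to treat $Y_{n-j}$ as approximately Gaussian and prove \eqref{beta-tail-local} as a moderate-deviation statement for the beta law, obtaining the other two claims from it by monotonicity and by Lemma~\ref{T1-T2}.

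\textbf{Setup.} I first record the mean and standard deviation of $Y_{n-j}\sim\operatorname{Beta}(n-j+1,q)$:
\[
\mu_j=\frac{n-j+1}{m-j+1},\qquad \sigma_j^2=\frac{(n-j+1)q}{(m-j+1)^2(m-j+2)}.
\]
Up to relative errors $O(1/(n-j))$ these are $\mu_j\approx\frac{n-j}{m-j}$ and $\sigma_j\approx\frac{\sqrt{(n-j)q}}{(m-j)^{3/2}}$. With these,
\[
\frac{L_n(x)/s-\mu_j}{\sigma_j}=\frac{T_1}{s}-T_2+o(\tau(s)^{-1}) .
\]
So the standardized threshold is $\tau(s)$, up to an error that is negligible inside the exponent $e^{-\tau^2/2}$. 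This uses $T_2\sim\sqrt{s_n}$ and $1/\sqrt{n}\ll s_n^{-1/2}$, which I expect to check routinely.

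\textbf{Local regime.} I write the beta density at $y=\mu_j+\sigma_j t$ and apply Stirling to the normalizing constant $1/B(n-j+1,q)$. Expanding $(n-j)\log y+(q-1)\log(1-y)$ to third order around the mode should give
\[
\sigma_j f(\mu_j+\sigma_j t)=\frac{1}{\sqrt{2\pi}}\exp\Bigl\{-\frac{t^2}{2}+O\Bigl(|t|^3\sqrt{\tfrac{m}{nq}}+(1+|t|)\sqrt{\tfrac{m}{nq}}\Bigr)\Bigr\},
\]
valid uniformly for $|t|\le 3\tau(1)$. Here $\sqrt{m/(nq)}$ is the natural skewness scale, being $\asymp 1/\sqrt{\min(n,q)}$. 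The main obstacle is making this error $o(1)$ uniformly.

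Lemma~\ref{T1-T2} gives $\tau(1)\sim u_n(j,x)\lesssim i_n/\sqrt{s_n}+\sqrt{\log s_n}\lesssim s_n^{1/30}$. Hence $\tau^3\sqrt{m/(nq)}\lesssim s_n^{1/10}\sqrt{m/(nq)}=o(1)$ by Lemma~\ref{snmnq} with $a=1/10$, which is exactly why $i_n=\lfloor s_n^{8/15}\rfloor$ was chosen.

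I then integrate the density from $\tau(s)$ to $3\tau(1)$ and use Lemma~\ref{sum1}(i) with $r=0$ to obtain $\frac{1+o(1)}{\sqrt{2\pi}\tau(s)}e^{-\tau^2(s)/2}$. The piece beyond $t=3\tau(1)$ I bound crudely: the log-density is concave, so the integrand decays at least like $e^{-3\tau(1)t/2}$ there, giving a contribution of order $e^{-4\tau(1)^2}$, which is negligible. Throughout I need $\tau(s)\ge\tau(1)\to\infty$, which holds because $\tau(s)\ge \tau(1)$ for $s\le1$ and $u_n\gtrsim\sqrt{\gamma_n}\to\infty$.

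\textbf{Far regime and \eqref{Pv}.} Let $s^*$ solve $\tau(s^*)=2\tau(1)$. If $\tau(s)>2\tau(1)$ then $s<s^*$, so the threshold $L_n(x)/s$ exceeds $L_n(x)/s^*$. Since the tail probability decreases as the threshold increases,
\[
\mathbb P\bigl(Y_{n-j}>L_n(x)/s\bigr)\le\mathbb P\bigl(Y_{n-j}>L_n(x)/s^*\bigr).
\]
If $s^*\ge L_n(x)$, the local estimate applies at $s^*$ and gives the bound $\lesssim e^{-2\tau^2(1)}/\tau(1)$. If $s^*<L_n(x)$, the threshold exceeds $1$ and the probability is $0$.

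Finally, for \eqref{Pv} I take $s=1$, so $\tau(1)=T_1-T_2$. Lemma~\ref{T1-T2} gives $\tau(1)\sim u_n(j,x)$ and $\tau(1)^2-u_n^2(j,x)=o(1)$, so $e^{-\tau(1)^2/2}=(1+o(1))e^{-u_n^2/2}$, which completes the argument.
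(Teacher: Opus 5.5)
Your proposal is correct, but it reaches the Gaussian tail by a different mechanism from the paper.

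\textbf{The paper's route.} The paper never looks at the beta density. It uses the gamma representation $Y_{n-j}\overset{d}{=}\sum E_r/(\sum E_r+\sum F_r)$ to rewrite $\{Y_{n-j}>p\}$ as $\{W_n>pq-(1-p)(n-j+1)\}$, where $W_n$ is a sum of independent centred exponentials. It checks that the normalized summands satisfy Bernstein's condition with $\varepsilon\asymp\sqrt{m/(nq)}$. It then quotes the Cram\'er-type moderate deviation theorem of Fan--Grama--Liu, which gives $(1-\Phi(z))\exp\{O(z^3\varepsilon+(1+z)\varepsilon|\log\varepsilon|)\}$.

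\textbf{Your route.} You instead prove a local limit theorem for the beta density via Stirling and a third-order Taylor expansion, integrate it, and control the far part of the integral by log-concavity. For that last step, the slope bound should come from a secant over, say, $[2\tau(1),3\tau(1)]$, since your expansion controls the log-density rather than its derivative.

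\textbf{Comparison.} Both arguments hinge on the same smallness condition $\tau^3(1)\sqrt{m/(nq)}=o(1)$, from Lemma~\ref{snmnq} with $a=1/10$. So the role of $i_n$ is identical in the two proofs.
\begin{itemize}
\item The paper's route is shorter because the uniformity is outsourced to a black-box theorem.
\item Yours is self-contained and yields a stronger, density-level statement. It uses essentially that both shape parameters $n-j+1$ and $q$ tend to infinity, which holds in this regime.
\item In the far range, you allow $s^*<L_n(x)$, in which case there is nothing to prove. This avoids the paper's check that $\tau(L_n(x))\gg\tau(1)$.
\item The equivalent fact $\tau(1)\ll\sqrt{mq/n}$ is still implicitly needed in your Taylor step, to keep $1-y\sim1-\mu_j$ on $|t|\le3\tau(1)$. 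It follows from the same bound $\tau^3(1)\sqrt{m/(nq)}=o(1)$.
\end{itemize}

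\textbf{A small correction.} Your justification ``$1/\sqrt{n}\ll s_n^{-1/2}$'' is backwards, since $s_n=mn/q\ge n$. What you actually need is the following:
\begin{itemize}
\item The standardized mean shift is $\sqrt{q(m-j)}/\bigl((m-j+1)\sqrt{n-j}\bigr)\asymp s_n^{-1/2}$.
\item $\sigma_j^2$ differs from $(n-j)q/(m-j)^3$ by a factor $1+O(1/n)$.
\end{itemize}
Hence the standardized threshold differs from $\tau(s)$ by $O(\tau/n+s_n^{-1/2})$. This is $o(\tau^{-1})$ because $\tau\lesssim s_n^{1/30}$ and $s_n\le 2n^2$. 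The conclusion you drew is therefore right; only the stated reason needs replacing.
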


\begin{proof}
We first consider $\tau(s)\le2\tau(1)$ and write
$p=L_n(x)/s$.
Let $E_1,\ldots,E_{n-j+1},F_1,\ldots,F_q$ be
independent exponential random variables with mean one.
The gamma representation of the beta distribution gives
\[
Y_{n-j}\overset d=
\frac{\sum_{r=1}^{n-j+1}E_r}
{\sum_{r=1}^{n-j+1}E_r+\sum_{r=1}^qF_r}.
\]
Since the denominator is positive, the event that this ratio
exceeds $p$ is equivalent to
\[
(1-p)\sum_{r=1}^{n-j+1}E_r
-p\sum_{r=1}^qF_r>0.
\]
Thus, writing
\[
W_n=(1-p)\sum_{r=1}^{n-j+1}(E_r-1)
-p\sum_{r=1}^q(F_r-1),
\]
we have
\[
\mathbb P(Y_{n-j}>p)
=\mathbb P\bigl(W_n>pq-(1-p)(n-j+1)\bigr).
\]
This is a sum of independent centered random variables,
with variance
\[
{\rm Var}(W_n)=(n-j+1)(1-p)^2+qp^2.
\]
Setting
\[
z=\frac{pq-(1-p)(n-j+1)}{\sqrt{ {\rm Var}(W_n)}},
\]
then
\[
\mathbb P(Y_{n-j}>p)=\mathbb P(\frac{W_n}{\sqrt {{\rm Var}(W_n)}}>z).
\]

We compare $z$ with $\tau(s)$.
By definition,
\[
p=\frac{n-j}{m-j}
+\frac{\sqrt{(n-j)q}}{(m-j)^{3/2}}\tau(s).
\]
It follows that
\[
pq-(1-p)(n-j+1)
=
\sqrt{\frac{(n-j)q}{m-j}}\,\tau(s)-(1-p)
\]
and
\[
{\rm Var}(W_n)=
\frac{(n-j)q}{m-j}
\Bigl(1+\frac{\tau(s)^2}{m-j}\Bigr)
+(1-p)^2.
\]
Since $0\le 1-p\le1$ and $1\le j\le i_n=o(n),$ it follows that
\[
{\rm Var}(W_n)=\frac{(n-j)q}{m-j}
\Bigl(
1+O\Bigl(\frac{\tau^2(s)}{m}+\frac{m}{nq}\Bigr)
\Bigr)
\]
and 
$$pq-(1-p)(n-j+1)
=
\sqrt{\frac{(n-j)q}{m-j}}(\tau(s)+O(\sqrt{\frac{m}{nq}})).$$ 
Thus, it follows from the definition of $z$ that 
$$z=\tau(s)(1+O(\frac{\tau^2(s)}{m}+\frac{m}{nq}))+O(\sqrt{\frac{m}{nq}}).$$
The facts 
$0<\tau(s)\le2\tau(1)\lesssim s_n^{1/30}$ and $s_n\le m^2$ imply
$$\tau^{k}(s)m^{-1}\lesssim \tau^k(1)m^{-1}\lesssim s_n^{k/30}m^{-1}\lesssim m^{-1+k/15}\ll 1$$
once $0\le k<15.$ Also, Lemma \ref{snmnq} implies $\frac{m}{nq}=o(1).$ 
Consequently,  
\[
z=\tau(s)+O\bigl(
\frac{\tau^3(s)}{m}
+\frac{\tau(s)m}{nq}
+\sqrt{\frac{m}{nq}}
\bigr).
\]
Since \(\tau'(s)=-T_1/s^2<0\), we have \(\tau(s)\ge\tau(1)\) for \(s\le1\). By Lemma \ref{T1-T2} and the monotonicity of \(u_n(j,x)\) in \(j\),
\[
\tau(1)\sim u_n(j,x)\ge u_n(1,x)
=\sqrt{\gamma_n}+\frac{x}{\sqrt{\gamma_n}}
\sim\sqrt{\gamma_n}\gg1.
\]
Thus \(\tau(s)\gg1\).
Hence $z\sim\tau(s)$ and moreover 
\[
z^2-\tau^2(s)
=O\bigl(
\frac{\tau^4(s)}{m}
+\frac{\tau^2(s)m}{nq}
+\tau(s)\sqrt{\frac{m}{nq}}
\bigr)
=o(1).
\]
Here,  the second equality is true since under the assumption $\tau(s)\le2\tau(1) $, together with Lemma \ref{snmnq} and \ref{T1-T2}, we have 
$$\frac{\tau^4(s)}{m}
+\frac{\tau^2(s)m}{nq}
+\tau(s)\sqrt{\frac{m}{nq}}\lesssim s_n^{2/15}m^{-1}+s_n^{1/30}\sqrt{\frac{m}{nq}}\ll 1.$$
We now apply a Cram\'er-type moderate deviation estimate.
Set
\[
\varepsilon=\frac{2\max\{p,1-p\}}{\sqrt {{\rm Var}(W_n)}}
\asymp\sqrt{\frac{m}{nq}}.
\]
Each summand of $W_n/\sqrt {{\rm Var}(W_n)}$ has the form
$c(E-1)$, where $E\sim\operatorname{Exp}(1)$ and
$|c|\le\varepsilon/2$.
Since $\mathbb E|E-1|^k\le k!$ for integers $k\ge2$,
\[
|\mathbb E\xi^k|
\le k!|c|^{k-2}\mathbb E\xi^2
\le\frac{k!}{2}\varepsilon^{k-2}\mathbb E\xi^2,
\qquad k\ge3.
\]
Thus the normalized summands satisfy the Bernstein
condition, and their variances sum to one.

Since \(z\sim\tau(s)\) uniformly and \(\tau(s)\le 2\tau(1)\lesssim s_n^{1/30}\), Lemma 2.6 with \(a=1/10\) gives
\[
z^3\varepsilon\lesssim \tau^3(1)\sqrt{\frac{m}{nq}}
\lesssim s_n^{1/10}\sqrt{\frac{m}{nq}}=o(1).
\]
Hence \(z=o(\varepsilon^{-1/3})\), and in particular \(z=o(\varepsilon^{-1})\). So the range
condition in \cite{FanGramaLiu2013} is satisfied.
By \cite[Theorems~2.1 and~2.2]{FanGramaLiu2013},
\[
\mathbb P(\frac{W_n}{\sqrt{{\rm Var}(W_n)}}>z)
=
(1-\Phi(z))
\exp\Bigl\{
O\bigl(z^3\varepsilon
+(1+z)\varepsilon|\log\varepsilon|\bigr)
\Bigr\}.
\]
Here $\Phi$ denotes the standard normal distribution
function.
For the second term, it follows from
\(z=o(\varepsilon^{-1/3})\) that 
\[
(1+z)\varepsilon|\log\varepsilon|
=
O(\varepsilon|\log\varepsilon|)
+o(\varepsilon^{2/3}|\log\varepsilon|)
=o(1).
\]
Therefore,
\[
\mathbb P(Y_{n-j}>p)=\mathbb P(\frac{W_n}{\sqrt {{\rm Var}(W_n)}}>z)
=(1+o(1))(1-\Phi(z)).
\]
Since $z\sim\tau(s)\gg 1$, and
$z^2-\tau^2(s)=o(1)$ uniformly, the Gaussian tail
asymptotic yields
\[
\mathbb P\bigl(Y_{n-j}>p\bigr)
=
\frac{1+o(1)}{\sqrt{2\pi}\,\tau(s)}
e^{-\tau^2(s)/2}.
\]
This proves \eqref{beta-tail-local}.

For the remaining range, observe that
\[
\tau(L_n(x))
=\sqrt{\frac{q(m-j)}{n-j}}.
\]
By Lemma \ref{T1-T2}, we have
\[
\tau(1)=T_1-T_2\sim u_n(j,x)\lesssim s_n^{1/30}
\]
uniformly for \(1\le j\le i_n\). 
%Since \(i_n=o(n)\), it follows that
%\[
%\frac{\tau(L_n(x))}{\tau(1)}
%\gtrsim
%\frac{\sqrt{qm}}{\sqrt n\,s_n^{1/30}}
%=
%\frac{m^{7/15}q^{8/15}}{n^{8/15}}
%\gtrsim
%m^{11/75}(\log m)^{16/15}
%\longrightarrow\infty
%\]
Since \(i_n=o(n)\),
\[
\tau(L_n(x))
=\sqrt{\frac{q(m-j)}{n-j}}
\asymp\sqrt{\frac{mq}{n}}
\]
uniformly for \(1\le j\le i_n\). Therefore,
\[
\frac{\tau(1)}{\tau(L_n(x))}
\lesssim
s_n^{1/30}\sqrt{\frac{n}{mq}}
\le
s_n^{1/30}\sqrt{\frac{m}{nq}}
=o(1)
\]
by Lemma \ref{snmnq}.

Hence there is a unique $s_*\in(L_n(x),1)$ such that
$\tau(s_*)=2\tau(1)$.
If $\tau(s)>2\tau(1)$, then $s<s_*$, so the monotonicity of the probability and the asymptotic \eqref{beta-tail-local} lead 
\[
\begin{aligned}
\mathbb P\bigl(Y_{n-j}>\frac{L_n(x)}s\bigr)
&\le
\mathbb P\bigl(Y_{n-j}>\frac{L_n(x)}{s_*}\bigr)\lesssim\frac{e^{-2\tau^2(1)}}{\tau(1)}.
\end{aligned}
\]
 This proves \eqref{beta-tail-far}.
 
 Finally, taking $s=1$ in \eqref{beta-tail-local}, Lemma 2.7 gives 
 $\tau(1)\sim u_n(j,x)$
 and
$ \tau^2(1)-u_n^2(j,x)=o(1)$,
 uniformly for \(1\le j\le i_n\), which yields
\eqref{Pv}.
\end{proof}

\begin{lem}\label{2m}
Let $i_n=\lfloor s_n^{8/15}\rfloor$,
$h_n=\lfloor s_n^{3/8}\rfloor$, and let $$u_n(t,x)=\frac{t-1}{\sqrt{s_n}}+\sqrt{\gamma_n}+\frac{x}{\sqrt{\gamma_n}}$$ for any $x\in\mathbb{R}$ and $t\ge 1.$ 

For fixed $x\in\mathbb R$, the following estimates hold.
\begin{enumerate}
    \item[(i)] Uniformly for $1\le j\le i_n$,
    \[
    M_{j,j}(x)
    =
    \frac{1+o(1)}
    {2\sqrt{2}\pi s_n^{1/4}u_n^{3/2}(j,x)}
    e^{-\frac12u_n^2(j,x)}.
    \]

    \item[(ii)] For $i_n\leq j\le n$,
    \[
    M_{j,j}(x)
    \lesssim n^{-3/10}e^{-\frac13n^{1/15}}.
    \]

    \item[(iii)] For $1\le j<k\le i_n$ with $k-j>h_n$,
    \[
    |M_{j,k}(x)|
    \lesssim
    \frac{e^{-\frac12u_n^2((j+k)/2,x)}}
    {(k-j)u_n((j+k)/2,x)}.
    \]
\end{enumerate}
\end{lem}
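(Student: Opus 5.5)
For (i), I will start from the representation \eqref{wMjj2}, written probabilistically as
\[
M_{j,j}(x)=\frac1{2\pi}\int_{L_n(x)}^1\frac{\mathbb P(Y_{n-j}>L_n(x)/s)}{\sqrt{s(1-s)}}\,ds .
\]
I split the $s$-range at the point $s_*$ from Lemma \ref{beta-tail}, where $\tau(s_*)=2\tau(1)$.

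On $(s_*,1)$ I insert \eqref{beta-tail-local}. This reduces the integral to
\[
\frac{1+o(1)}{(2\pi)^{3/2}}\int_{s_*}^1\frac{\tau(s)^{-1}e^{-\tau^2(s)/2}}{\sqrt{s(1-s)}}\,ds ,
\]
which is exactly the integral of Lemma \ref{int} with $a=T_1$, $b=T_2$ and $v=s_*$. The hypotheses of that lemma hold. We have $a\sim\sqrt{s_n}\gg1$ and $a-b=\tau(1)\sim u_n\gg1$ by Lemma \ref{T1-T2}. The remaining condition $a(1-v)/v\gg1$ holds because $T_1/s_*-T_1=\tau(1)$, so $T_1(1-s_*)/s_*=\tau(1)\gg1$.

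Lemma \ref{int} then gives the value $\sqrt\pi\,T_1^{-1/2}\tau(1)^{-3/2}e^{-\tau^2(1)/2}$. I will replace $T_1$ by $\sqrt{s_n}$ and $\tau(1)$ by $u_n(j,x)$ using Lemma \ref{T1-T2}. The fact that $\tau^2(1)-u_n^2=o(1)$ is what permits the substitution inside the exponential. Collecting constants,
\[
\frac{\sqrt\pi}{(2\pi)^{3/2}}=\frac{1}{2\sqrt2\,\pi},
\]
which gives the claimed form.

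On $(L_n(x),s_*)$ I use \eqref{beta-tail-far}. Since $\int ds/\sqrt{s(1-s)}\le\pi$, this part is $O(e^{-2\tau^2(1)}/\tau(1))$. I still have to check that this is negligible against $s_n^{-1/4}u_n^{-3/2}e^{-u_n^2/2}$. The ratio is at most $s_n^{1/4}e^{-3u_n^2/2}$, and $u_n^2\ge\gamma_n+2x\sim\frac12\log s_n$, so the ratio is about $s_n^{1/4-3/4}\to0$. One caveat is that the bound of Lemma \ref{int} was derived on $[s_*,1]$ rather than on the full interval, but the lemma applies directly with $v=s_*$.

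For (ii), the monotonicity in Lemma \ref{integral-rep} gives $M_{j,j}\le M_{i_n,i_n}$ for $j\ge i_n$. I then evaluate $M_{i_n,i_n}$ by (i), with $u_n(i_n,x)\approx s_n^{1/30}$. This yields roughly $s_n^{-1/4-1/20}e^{-s_n^{1/15}/2}$, and I compare it with $n^{-3/10}e^{-n^{1/15}/3}$ using $s_n=mn/q\ge n$.

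The main obstacle is this comparison in the exponent. It requires $\frac12 s_n^{1/15}(1+o(1))\ge\frac13 n^{1/15}$, which holds since $s_n\ge n$. The prefactor needs care: $s_n^{-3/10}\le n^{-3/10}$ again holds because $s_n\ge n$. Since (i) is stated only for $j\le i_n$, I will apply it exactly at $j=i_n$.

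For (iii), I apply \eqref{wMjk} and write $t=(j+k)/2$. I need to bound
\[
C_t^2\int_L^1(1-u)^{q-1}u^{n-t}\,du=\frac1\pi\,\mathbb P(Y_{n-t}>L),
\]
and here $t$ may be a half-integer. Because $n-t+1$ enters only as a shape parameter, the gamma-representation argument of Lemma \ref{beta-tail} goes through for real shape parameters: one can use a Gamma$(n-t+1)$ variable in place of the sum of exponentials, and the Bernstein condition persists. Alternatively, I can sandwich the non-integer case between the neighbouring integers $\lfloor t\rfloor$ and $\lceil t\rceil$ via stochastic monotonicity. Either way \eqref{Pv} gives $\mathbb P(Y_{n-t}>L)\lesssim u_n(t,x)^{-1}e^{-u_n^2(t,x)/2}$. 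The factor $(k-j)^{-1}$ then yields (iii). The sandwich route changes $u_n^2$ by only $O(u_n/\sqrt{s_n})=o(1)$, so it costs at most a constant factor.
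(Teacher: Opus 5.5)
Your proposal is correct and follows the paper's proof essentially step by step. It uses the same split of \eqref{wMjj2} at $s_*$ with Lemma~\ref{beta-tail} and Lemma~\ref{int} for (i), monotonicity plus (i) at $j=i_n$ for (ii), and the stochastic-monotonicity reduction to $\lfloor (j+k)/2\rfloor$ followed by \eqref{Pv} for (iii). The only deviation is that in (i) you apply Lemma~\ref{int} directly with $v=s_*$. This is legitimate since $T_1(1-s_*)/s_*=\tau(1)\gg1$, and it slightly shortens the paper's argument, which instead extends the Gaussian integral back to $[L_n(x),1]$ and checks $T_1(1-L_n(x))/L_n(x)\gg1$.
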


\begin{proof}
We first prove (i). By the integral representation
\eqref{wMjj2},
\[
M_{j,j}(x)
=\frac{1}{2\pi}
\int_{L_n(x)}^1
\frac{\mathbb P(Y_{n-j}>L_n(x)/s)}
{\sqrt{s(1-s)}}\,ds.
\]
Use the notation of Lemma \ref{beta-tail}, and let $s_*$
be determined by $\tau(s_*)=2\tau(1)$.
On $[s_*,1]$, the local estimate in that lemma gives
\begin{equation}\label{inte}
\int_{s_*}^1
\frac{\mathbb P(Y_{n-j}>L_n(x)/s)}
{\sqrt{s(1-s)}}\,ds
=
(1+o(1))
\int_{s_*}^1
\frac{\phi(\tau(s))}
{\tau(s)\sqrt{s(1-s)}}\,ds,
\end{equation}
where $\phi(t)=(2\pi)^{-1/2}e^{-t^2/2}$.
The relative error is uniform in both $j$ and $s$, so it
remains $o(1)$ after integration.

On $[L_n(x),s_*]$, the remaining estimate in
Lemma \ref{beta-tail} yields

\begin{equation}\label{inte1}\aligned 
\int_{L_n(x)}^{s_*}
\frac{\mathbb P(Y_{n-j}>L_n(x)/s)}
{\sqrt{s(1-s)}}\,ds
&\lesssim
\frac{e^{-2\tau^2(1)}}{\tau(1)}
\int_{L_n(x)}^{s_*}\frac{ds}{\sqrt{s(1-s)}}\\
&\lesssim \frac{e^{-2\tau^2(1)}}{\tau(1)},
\endaligned
\end{equation}

since the last integral is at most $\pi$.
Also, $t\mapsto\phi(t)/t$ is decreasing for $t>0$.
As $\tau(s)\ge2\tau(1)$ on this interval, the same argument
gives
\begin{equation}\label{inte2}\aligned 
\int_{L_n(x)}^{s_*}
\frac{\phi(\tau(s))}
{\tau(s)\sqrt{s(1-s)}}\,ds
\lesssim\frac{e^{-2\tau^2(1)}}{\tau(1)}.
\endaligned
\end{equation} 
Combining the three integrals \eqref{inte}, \eqref{inte1} and \eqref{inte2} together, we have 
\begin{equation}\label{Mjj}
M_{j,j}(x)
=\frac{1+o(1)}{2\pi}
\int_{L_n(x)}^1
\frac{\phi(\tau(s))}
{\sqrt{s(1-s)}}\,ds+O(\frac{e^{-2\tau^2(1)}}{\tau(1)}).
\end{equation}

We next evaluate the integral in \eqref{Mjj}.
By Lemma \ref{T1-T2},
\[
T_1\sim\sqrt{s_n},
\qquad
T_2\sim\sqrt{s_n},
\qquad
T_1-T_2=\tau(1)\gg1.
\]
We next verify $\frac{T_1(1-L_n(x))}{L_n(x)}\gg 1.$ In fact, \eqref{An} and \eqref{Bn} imply 
$$L_n(x)=\frac{n}{m}(1+O(\frac{\sqrt{\log s_n}}{\sqrt{s_n}}+\frac{\sqrt{\log s_n}}{n}+\frac{\sqrt{q}}{\sqrt{n m \gamma_n}})),$$
which implies 
$$L_n(x)\sim \frac{n}{m} \qquad \text{and} \qquad 1-L_n(x)\sim\frac{q}{m}.$$
Consequently,
\[
\frac{T_1(1-L_n(x))}{L_n(x)}
\sim\sqrt{s_n}\frac{q}{n}
=\sqrt{\frac{mq}{n}}
\gg1.
\]
Thus, Lemma \ref{int} is applicable with
$a=T_1$, $b=T_2$ and $v=L_n(x)$ and then 
\begin{equation}\label{Mjj0}
\int_{L_n(x)}^1
\frac{\phi(\tau(s))}
{\tau(s)\sqrt{s(1-s)}}\,ds
=
\frac{1+o(1)}{\sqrt{2T_1}\,(\tau(1))^{3/2}}
e^{-\frac12\tau^2(1)}.
\end{equation}
Comparing the expressions \eqref{Mjj} and \eqref{Mjj0}, once  
\[
\sqrt{T_1 \tau(1)}e^{-\frac32\tau^2(1)}\ll 1,
\] 
we have 
\begin{equation}\label{Mjj-semifinal}
M_{j,j}(x)
=
\frac{1+o(1)}
{2\pi\sqrt{2T_1}\,\tau^{3/2}(1)}
e^{-\frac12\tau^2(1)}.
\end{equation}

In fact, by  Lemma \ref{T1-T2} and the monotonicity of $u_n(j,x)$,
\[
\tau^2(1)
=u_n^2(j,x)+o(1)
\ge u_n^2(1,x)+o(1)
=\gamma_n+2x+o(1).
\]
Since $t^{1/2}e^{-3t^2/2}$ is decreasing for all sufficiently
large $t$, it follows that
\[
\begin{aligned}
\sqrt{T_1\tau(1)}e^{-\frac32\tau^2(1)}
&\lesssim
s_n^{1/4}\gamma_n^{1/4}e^{-\frac32\gamma_n}\le s_n^{-1/4}=o(1),
\end{aligned}
\]
where we used $\gamma_n\sim\frac12\log s_n\ge \frac{5}{12}\log s_n$.

Finally, Lemma \ref{T1-T2} gives
\[
T_1^{1/2}\sim s_n^{1/4},
\qquad
(\tau(1))^{3/2}\sim u_n^{3/2}(j,x),
\qquad
e^{-\frac12\tau^2(1)}
=(1+o(1))e^{-\frac12 u_n^2(j,x)}.
\]
Substituting these estimates into
\eqref{Mjj-semifinal}  proves (i).

For (ii), the diagonal entries are nonincreasing in $j$,
so part (i) implies
\[
M_{j,j}(x)
\le M_{i_n,i_n}(x)
\lesssim
s_n^{-1/4}u_n^{-3/2}(i_n,x)
e^{-\frac12 u_n^2(i_n,x)},
\qquad j\geq i_n.
\]
Since
\[
u_n(i_n,x)
=\frac{i_n-1}{\sqrt{s_n}}
+\sqrt{\gamma_n}+\frac{x}{\sqrt{\gamma_n}}
\sim s_n^{1/30},
\]
we have
\[
s_n^{-1/4}u_n^{-3/2}(i_n,x)
\lesssim s_n^{-3/10},
\qquad
\frac12u_n^2(i_n,x)\ge\frac13s_n^{1/15}
\]
for all sufficiently large $n$. Therefore,
\[
M_{j,j}(x)
\lesssim s_n^{-3/10}e^{-\frac13s_n^{1/15}}
\le n^{-3/10}e^{-\frac13n^{1/15}},
\]
where the last inequality follows from $s_n=mn/q\ge n$.
This proves (ii).

Finally, we prove (iii), including the case where $j+k$
is odd. The off-diagonal bound in Lemma 2.1 gives
\begin{equation}\label{Py}
|M_{j,k}(x)|
\le\frac{1}{\pi(k-j)}
\mathbb P\bigl(Y_{n-(j+k)/2}>L_n(x)\bigr),
\end{equation}
where
\[
Y_{n-(j+k)/2}
\sim\operatorname{Beta}\bigl(n-(j+k)/2+1,q\bigr).
\]
Set $\ell=\lfloor(j+k)/2\rfloor$.
The beta distribution is stochastically increasing in its
first parameter when its second parameter is fixed.
Since
\[
n-\frac{j+k}{2}+1\le n-\ell+1,
\]
we obtain
\[
\mathbb P\bigl(Y_{n-(j+k)/2}>L_n(x)\bigr)
\le\mathbb P(Y_{n-\ell}>L_n(x)).
\]
Here $1\le\ell\le i_n$, so Lemma \ref{beta-tail} yields
\[
\mathbb P(Y_{n-\ell}>L_n(x))
\lesssim
\frac{e^{-\frac12u_n^2(\ell,x)}}{u_n(\ell,x)}.
\]

It remains to replace $\ell$ by $(j+k)/2$.
By the definition of $u_n$,
\[
0\le
u_n\Bigl(\frac{j+k}{2},x\Bigr)-u_n(\ell,x)
\le\frac{1}{2\sqrt{s_n}}.
\]
Since $u_n(\ell,x)\gtrsim\sqrt{\log s_n}$, this implies
\[
\frac{u_n((j+k)/2,x)}{u_n(\ell,x)}
=
1+O\Bigl(\frac{1}{\sqrt{s_n\log s_n}}\Bigr)
=1+o(1).
\]
Moreover, both arguments are at most $i_n$, and therefore
\[
\begin{aligned}
0
\le u_n^2\bigl(\frac{j+k}{2},x\bigr)-u_n^2(\ell,x)
&=
\bigl(
u_n\bigl(\frac{j+k}{2},x\bigr)-u_n(\ell,x)
\bigr)
\bigl(
u_n\bigl(\frac{j+k}{2},x\bigr)+u_n(\ell,x)
\bigr)\\
&\lesssim s_n^{-1/2}s_n^{1/30}
=s_n^{-7/15}
=o(1).
\end{aligned}
\]
Consequently,
\[
\frac{e^{-\frac12 u_n^2(\ell,x)}}{u_n(\ell,x)}
=
(1+o(1))
\frac{e^{-\frac12 u_n^2((j+k)/2,x)}}
{u_n((j+k)/2,x)}.
\]
Combining this estimate with \eqref{Py} proves (iii).
\end{proof}

\subsection{Proof of Theorem~1 for \(m^{2/5}(\log m)^{2}\lesssim q<m\)}

We first estimate the trace. Using the diagonal estimate from Lemma~\ref{2m}(i) and Lemma \ref{sum1}, we obtain
\begin{equation}\label{sumin}
	\sum_{k=1}^{i_n}M_{k,k}(x)\sim\sum_{k=1}^{i_n}\frac{e^{-\frac{1}{2}u_n^{2}(k,x)}}{2\sqrt{2}\pi s_n^{1/4} u_n^{3/2}(k,x)}
	\sim \frac{s_n^{1/4}e^{-\frac{1}{2}u_n^{2}(1,x)}}{2\sqrt{2}\pi\, u_n^{5/2}(1,x)},
\end{equation}
where the last equivalence is due to the  fact
\[
\frac{1}{u_n^{5/2}(i_n+1,x)}e^{-\frac{u_n^{2}(i_n+1,x)}{2}}\ll \frac{e^{-\frac{1}{2}u_n^{2}(1,x)}}{u_n^{5/2}(1,x)}.
\]
Indeed,
\begin{equation}\label{un1}
	u_n(i_n+1,x)\sim s_n^{1/30} \quad \text{and}\quad u_n(1,x)\sim \sqrt{\frac{\log s_n}{2}}.
\end{equation}
Furthermore, the identity
\[
 u_n^2(1,x)=\gamma_n+2x+x^2/\gamma_n
\]
and \eqref{gamma-match} give
\begin{equation}\label{expu2}
 e^{-\frac12u_n^2(1,x)}
 =2\sqrt2\pi\gamma_n^{5/4}s_n^{-1/4}e^{-x-\frac{x^2}{2\gamma_n}}
   (1+o(\gamma_n^{-1})).
\end{equation}
Thus, the leading expression in \eqref{sumin} is
\begin{equation}\label{TrMx}
 e^{-x}\bigl(1+\frac{x}{\gamma_n}\bigr)^{-5/2}
 e^{-\frac{x^2}{2\gamma_n}}(1+o(\gamma_n^{-1}))
\end{equation} 
and then 
\begin{equation}\label{suminok}
 \sum_{k=1}^{i_n}M_{k,k}(x)\sim e^{-x}.
\end{equation}
For the tail \(j>i_n\), Lemma~\ref{2m}(ii) gives
\begin{equation}\label{TrMx1}
\sum_{j=i_n+1}^{n}M_{j,j}(x)\lesssim n^{7/10}e^{-\frac{1}{3}n^{1/15}}\ll 1.
\end{equation}
Combining this with \eqref{suminok}, we conclude that
\[
\operatorname{Tr}(M(x))=\sum_{j=1}^{i_n}M_{j,j}(x)+\sum_{j=i_n+1}^{n}M_{j,j}(x)\sim e^{-x}.
\]
It remains to estimate the Hilbert--Schmidt norm.
Since $\operatorname{Tr}(M(x))=O(1)$, we apply
Lemma~\ref{HS} with $\alpha_n=i_n$ and $\beta_n=h_n$.

First, Lemma~\ref{2m}(ii) gives
\begin{equation}\label{2pnmin1}
nM_{i_n,i_n}(x)
\lesssim n^{7/10}e^{-\frac13n^{1/15}}
=o(1).
\end{equation}
Next, Lemma~\ref{2m}(i), together with
\eqref{un1} and \eqref{expu2}, yields
\begin{equation}\label{2inhn}
\begin{aligned}
i_nh_n|M_{1,1}(x)|^2
&\lesssim
s_n^{49/120}u_n^{-3}(1,x)e^{-u_n^2(1,x)}\\
&\lesssim s_n^{-11/120}\log s_n
=o(1).
\end{aligned}
\end{equation}

For the remaining off-diagonal terms, Lemma~\ref{2m}(iii)
gives
\[
\sum_{\substack{1\le j<k\le i_n\\ k-j>h_n}}
|M_{j,k}(x)|^2
\lesssim
\sum_{j=1}^{i_n}
\sum_{k=j+h_n+1}^{i_n}
\frac{e^{-u_n^2((j+k)/2,x)}}
     {(k-j)^2u_n^2((j+k)/2,x)}.
\]
Here and below, an empty inner sum is understood to be zero.
Since $u_n(t,x)$ is increasing in $t$ and
$t^{-2}e^{-t^2}$ is decreasing for $t>0$, we have
\[
\frac{e^{-u_n^2((j+k)/2,x)}}
     {u_n^2((j+k)/2,x)}
\le
\frac{e^{-u_n^2(j,x)}}{u_n^2(j,x)}.
\]
Thus all subsequent estimates involve integer indices.
Using
\[
\sum_{k=j+h_n+1}^{i_n}(k-j)^{-2}
\le\sum_{d=h_n+1}^{\infty}d^{-2}
\le h_n^{-1},
\]
and Lemma \ref{sum1}(ii)
we obtain
\[
\sum_{\substack{1\le j<k\le i_n\\ k-j>h_n}}
|M_{j,k}(x)|^2
\lesssim
h_n^{-1}
\sum_{j=1}^{i_n}
u_n^{-2}(j,x)e^{-u_n^2(j,x)}\lesssim h_n^{-1}\sqrt{s_n}\,u_n^{-3}(1,x)e^{-u_n^2(1,x)}.
\]
Since $h_n\asymp s_n^{3/8}$, it follows that
\[
\begin{aligned}
\sum_{\substack{1\le j<k\le i_n\\ k-j>h_n}}
|M_{j,k}(x)|^2
\lesssim
s_n^{1/8}u_n^{-3}(1,x)e^{-u_n^2(1,x)}\lesssim s_n^{-3/8}\log s_n
=o(1).
\end{aligned}
\]
Combining this estimate with \eqref{2pnmin1} and
\eqref{2inhn}, Lemma~\ref{HS} gives
\[
\|M(x)\|_{\mathrm{HS}}^2=o(1),
\]
and hence $\|M(x)\|_{\mathrm{HS}}=o(1)$. The proof is completed. 

\section{The regime \(1\ll q\ll m^{2/5}(\log m)^2\)}

We first establish the estimates on the matrix entries.

\begin{lem}\label{1nkM}
	Let
	\[
	t_n=\bigl\lfloor \bigl(\tfrac{m}{q}\bigr)^{3/5}\bigr\rfloor,\qquad
	j_n=\bigl\lfloor\frac{m}{q^{3/4}\sqrt{1-\lambda_n}}\bigr\rfloor,\qquad
	\ell_n=\bigl\lfloor\frac{m}{\sqrt{q(1-\lambda_n)}}\bigr\rfloor.
	\]
	For each fixed \(x\in\mathbb R\), the following estimates hold
	uniformly over the indicated ranges of indices.

	\begin{enumerate}
		\item[(i)] For \(1\le j\le j_n\),
		$$	M_{j,j}(x)\sim\frac{(1-\lambda_n)q}{m}
		\exp\Bigl(-x-\frac{(j-1)(1-\lambda_n)q}{m}\Bigr). $$

		\item[(ii)] For \(j_n<j\le \ell_n\),
		$$M_{j,j}(x)\lesssim \frac{(1-\lambda_n)q}{m}
		\exp\Bigl(-x-\frac{(j-1)(1-\lambda_n)q}{m}\Bigr). $$
		\item[(iii)] For \(j\geq \ell_n\),
		$$	M_{j,j}(x)\lesssim \frac{q}{m}e^{-q^{1/4}}. $$

		\item[(iv)] For \(k-j>t_n\) and \(k\le \ell_n\),
		\begin{equation}\label{Mjk-offdiag}
			|M_{j,k}(x)|\lesssim
			\frac{(1-\lambda_n)^{3/2}q}{m^{9/20}(k-j)}
			\exp\Bigl(-\frac{(j+k)(1-\lambda_n)q}{2m}\Bigr).
		\end{equation}
	\end{enumerate}
\end{lem}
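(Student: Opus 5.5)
The plan is to work directly with the exact representations of Lemma~\ref{integral-rep}: \eqref{wMjj1} for the diagonal entries and \eqref{wMjk} for the off-diagonal ones. In this regime one evaluates them by a Laplace-type analysis near the lower endpoint $u=L$, rather than by the Gaussian (Cram\'er) approximation used in Section~3. The reason is that here $n\sim m$, $q\ll m$ and $1-L_n(x)$ is of order $q\lambda_n/m$. Accordingly substitute $u=1-w/m$ and set
\[
W:=m(1-L_n(x)) = q\lambda_n - mB_nx = q\lambda_n-\frac{A_n\lambda_n}{1-\lambda_n}x .
\]
Since $u-L\asymp 1/m$, one has $\beta_L(u)=\sqrt{(u-L)/u}\,(1+O(m^{-1}))=\sqrt{(W-w)/m}\,(1+o(1))$. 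This gives
\[
M_{j,j}(x)=\frac{(1+o(1))\,C_j^2}{m^{q+1/2}}\int_0^{W} w^{q-1}\Bigl(1-\frac wm\Bigr)^{n-j}\sqrt{W-w}\,dw .
\]
The integrand's logarithm $g(w)=(q-1)\log w+(n-j)\log(1-w/m)$ is increasing on $[0,W]$. Its slope at the endpoint is
\[
\kappa:=\frac{q-1}{W}-\frac{n-j}{m}+O(q/m)=\frac{1-\lambda_n}{\lambda_n}+O\Bigl(\frac{j}{m}+\frac1q+\frac{1}{m^{1/10}}\Bigr)\sim\frac{1-\lambda_n}{\lambda_n},
\]
and its curvature is $-g''\asymp 1/(q\lambda_n^2)$. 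Lemma~\ref{lambda-bound} gives $\sqrt q(1-\lambda_n)\gg1$, i.e.\ the decay length $1/\kappa$ is much smaller than the curvature scale $\sqrt q\lambda_n$. So the integral is dominated by an $O(1/\kappa)$-window at $w=W$, and
\[
\int_0^W\cdots\,dw\sim e^{g(W)}\int_0^\infty \sqrt y\,e^{-\kappa y}\,dy=\frac{\sqrt\pi}{2}\,e^{g(W)}\kappa^{-3/2}.
\]
The contribution of $w$ far from $W$ is killed by concavity, using $g(W)-g(W-y)\ge \kappa y$.

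Next I insert Stirling for $C_j^2=\Gamma(m-j+1)/(\pi\Gamma(q)\Gamma(n-j+1))$, which gives $C_j^2\sim (m-j)^q/(\pi\Gamma(q))$. The $j$-dependence then combines as follows. The factor $(1-j/m)^q$ gives $e^{-jq/m}$, and $(1-W/m)^{-j}$ gives $e^{jW/m}=e^{jq\lambda_n/m+o(1)}$. Together they produce $\exp(-(j-1)(1-\lambda_n)q/m)$, up to a $j$-independent constant and errors of order $qj^2/m^2+jq/(m\sqrt{q}\cdots)$. The choice $j\le j_n=m/(q^{3/4}\sqrt{1-\lambda_n})$ is made precisely so that these quadratic errors, and the $j/m$ perturbation of $\kappa$, are $o(1)$ relative to the leading terms. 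The $x$-dependence enters only through $W$: the endpoint moves by $-\lambda_nx/(1-\lambda_n)$, and multiplying by the slope $\kappa$ yields the factor $e^{-x}$. What remains is the $j,x$-free constant
\[
\frac{W^{q-1}e^{-W}\kappa^{-3/2}}{2\sqrt\pi\,\Gamma(q)\,m^{1/2}}\cdot\frac{m^q}{m^{q}},
\]
evaluated at $x=0$. I must check, using the defining equation \eqref{equation} (in the form \eqref{limequation}, with the exact $\Gamma(q+3/2)$ and Stirling), that this constant equals $(1-\lambda_n)q/m\,(1+o(1))$. I expect this matching step to be the main obstacle. It requires keeping track of every polynomial prefactor: $\lambda_n^{-1/2}$, $(1-\lambda_n)^{5/2}$, $q^{q+1/2}/\Gamma(q+3/2)$ and the factor $e^{q}$. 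It also requires checking that the $O(q/m)$, $O(\log m/q)$ corrections and the replacement $mq/n\to q$ (already justified in Lemma~\ref{lambda-bound}) are all $o(1)$ in the exponent. This gives (i).

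For (ii), the range is $j_n<j\le\ell_n$, and I would repeat the argument with inequalities instead of asymptotics. I use $(1-j/m)^q\le e^{-jq/m}$, $(1-w/m)^{-j}\le e^{jw/m}e^{O(jw^2/m^2)}$ and $\kappa\gtrsim(1-\lambda_n)$, the last of which still holds since $j/m\le \ell_n/m=(q(1-\lambda_n))^{-1/2}\ll 1-\lambda_n$ by $\sqrt q(1-\lambda_n)\gg1$. This gives the upper bound up to a constant. For (iii), $M_{j,j}$ is nonincreasing in $j$ by Lemma~\ref{integral-rep}, so it suffices to apply (ii) at $j=\ell_n$. There the exponent is $\ell_n(1-\lambda_n)q/m=\sqrt{q(1-\lambda_n)}=q^{1/4}(\sqrt q(1-\lambda_n))^{1/2}\gg q^{1/4}$, and the prefactor is at most $q/m$.

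For (iv), I apply \eqref{wMjk} and run the same Laplace analysis, but now without the angular factor $\beta_L$. The integral is then $\sim e^{g(W)}/\kappa$ instead of $\tfrac{\sqrt\pi}{2}e^{g(W)}\kappa^{-3/2}m^{-1/2}$. Relative to the diagonal formula at index $(j+k)/2$, one therefore gains a factor $\asymp\sqrt{m\kappa}\asymp\sqrt{m(1-\lambda_n)/\lambda_n}$. Combining this with (i)--(ii) at the midpoint index (using $C_jC_k\le C^2_{(j+k)/2}$, already built into \eqref{wMjk}) gives a bound $\lesssim (1-\lambda_n)^{3/2}q\,m^{-1/2}\lambda_n^{-1/2}(k-j)^{-1}\exp(-(j+k)(1-\lambda_n)q/(2m))$. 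Finally, $m\lambda_n\gg m^{9/10}$ (Lemma~\ref{lambda-bound}) gives $m^{-1/2}\lambda_n^{-1/2}\lesssim m^{-9/20}$, which is \eqref{Mjk-offdiag}.
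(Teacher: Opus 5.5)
Your part (i) is the paper's argument in different coordinates. It is a Laplace analysis at the lower endpoint $u=L$, with the exponent linearized because $\sqrt q(1-\lambda_n)\to\infty$, followed by Stirling and \eqref{limequation} to identify the constant. Your slope $\kappa$ is the paper's $a_j$ rescaled, $\kappa\approx a_j/\lambda_n$, and the matching step does close to $(1-\lambda_n)q/m$.

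The gap is in part (ii), and through the midpoint bound also in (iv). The $x$-shift of the endpoint, $W=q\lambda_n-A_n\lambda_n x/(1-\lambda_n)$, enters through $(1-W/m)^{-j}$. So besides $e^{jq\lambda_n/m}$ you also get the factor
\[
\exp\Bigl\{-\frac{jA_n\lambda_n x}{m(1-\lambda_n)}\Bigr\}.
\]
For $j\le j_n$ this factor is $e^{o(1)}$, because $j_n/(m(1-\lambda_n))=(q(1-\lambda_n)^2)^{-3/4}\to0$; this is where (i) silently uses $j\le j_n$. For $x<0$ and $j$ near $\ell_n$, however, it is of size $\exp\{c|x|(q(1-\lambda_n)^3)^{-1/2}\}$. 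Lemma~\ref{lambda-bound} gives $q(1-\lambda_n)^3\lesssim(\log m)^{3/2}q^{-1/2}\to0$ as soon as $q\gg(\log m)^3$, which lies inside this regime. So the bound you obtain from $(1-j/m)^q\le e^{-jq/m}$ carries an unbounded factor, and (ii) with a uniform constant does not follow.

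The missing step is to keep the quadratic term, $(1-j/m)^q\le\exp\{-jq/m-qj^2/(2m^2)\}$, and complete the square:
\[
-\frac{qj^2}{2m^2}+\frac{|x|\,j}{m(1-\lambda_n)}\le\frac{x^2}{2q(1-\lambda_n)^2}=o(1).
\]
This is what the paper does, and the same step is needed at the index $(j+k)/2$ in (iv). Part (iii) would survive without it, since at $j=\ell_n$ the extra exponent is $o(\sqrt{q(1-\lambda_n)})$.

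There are three smaller corrections.
\begin{itemize}
\item Your justification $\ell_n/m\ll1-\lambda_n$ fails in the same subregime, since it is equivalent to $q(1-\lambda_n)^3\to\infty$. It is not needed: $\kappa_j=(q-1)/W-(n-j)/(m-W)$ is increasing in $j$, so $\kappa_j\ge\kappa_1\sim(1-\lambda_n)/\lambda_n$.
\item That bound $\kappa_j\ge\kappa_1$ is also the lower bound you actually need, not $\kappa\gtrsim1-\lambda_n$. For $q\ll\log m$ one has $\lambda_n\to0$, and $\kappa\gtrsim 1-\lambda_n$ would cost a factor $\lambda_n^{-3/2}$ in (ii).
\item In (iv), $\kappa\asymp(1-\lambda_n)/\lambda_n$ need not hold for indices near $\ell_n$. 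Instead, bound the angle-free integral by $e^{g(W)}/\kappa_r\le e^{g(W)}/\kappa_1$ using concavity; this yields the gain factor $\sqrt{m(1-\lambda_n)/\lambda_n}$ you want.
\end{itemize}
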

\begin{proof}
	Write
	\[
	\delta_n:=1-\lambda_n.
	\]
	We prove each estimate in turn.

	\emph{Proof of (i).}
	Starting from the integral representation \eqref{wMjj1} in
	Lemma~\ref{integral-rep}, we have
	\[
	M_{j,j}(x)
	=
	C_j^2\int_{L_n(x)}^{1}(1-u)^{q-1}u^{n-j}
	\arccos\Bigl(\sqrt{\frac{L_n(x)}{u}}\Bigr)\,du,
	\]
	where
	\[
	C_j=\sqrt{\frac{\Gamma(m-j+1)}
	{\pi\,\Gamma(q)\Gamma(n-j+1)}},\qquad
	L_n(x)=A_n+B_nx
	=1-\frac{q\lambda_n}{m}
	+\frac{\lambda_n x A_n}{m\delta_n}.
	\]
	Observe that \(L_n(x)\to1\) and then 
	\[
	\arccos\sqrt{\frac{L_n(x)}{u}}
	=
	\sqrt{u-L_n(x)}\,(1+O(1-L_n(x)))
	\]
	uniformly for \(u\in (L_n(x), 1)\). Consequently,
	\begin{equation}\label{Mjj1}
	M_{j,j}(x)
	\sim C_j^2
	\int_{L_n(x)}^1
	(1-u)^{q-1}u^{n-j}\sqrt{u-L_n(x)}\,du.
	\end{equation}

	Set
	\[
	w=1-L_n(x)
	=\frac{q\lambda_n}{m}
	-\frac{\lambda_n x A_n}{m\delta_n}
	=
	\frac{q\lambda_n}{m}
	\Bigl(1-\frac{xA_n}{q\delta_n}\Bigr)
	\ll1,
	\]
	where the final estimate follows from
	\(\sqrt q\,\delta_n\gg1\) and the assumed growth of \(q\). Let \(s=(1-u)/w\). Then
	\[
	u^{n-j}
	=
	\exp\{(n-j)\log(1-ws)\}
	=
	\exp\{- (n-j)ws+O(nw^2s^2)\}.
	\]
	Since
	\[
	nw^2s^2\le nw^2\lesssim \frac{nq^2}{m^2}=o(1),
	\]
	it follows that
	\[
	u^{n-j}
	=
	(1+o(1))e^{-(n-j)ws}
	\]
	uniformly for \(0\le s\le1\). Consequently,
	\begin{equation}\label{sus}
		\int_{L_n(x)}^{1}(1-u)^{q-1}u^{n-j}
		\sqrt{u-L_n(x)}\,du
		\sim
		w^{q+1/2}
		\int_{0}^{1}s^{q-1}e^{-(n-j)ws}\sqrt{1-s}\,ds.
	\end{equation}
	Next, with the change of variables \(t=q(1-s)\), we obtain
	\begin{equation}\label{sus2}
		\int_{0}^{1}s^{q-1}e^{-(n-j)ws}\sqrt{1-s}\,ds
		=
		q^{-3/2}e^{-(n-j)w}
		\int_{0}^{q}
		\Bigl(1-\frac{t}{q}\Bigr)^{q-1}
		e^{\frac{(n-j)w}{q}t}\sqrt{t}\,dt.
	\end{equation}

	For \(1\le j\le j_n\), set
	\[
	a_j=1-\frac{(n-j)w}{q}.
	\]
	By the definition of \(w\) and the identity \(n=m-q\),
	\[
	a_j
	=
	\delta_n
	+\frac{\lambda_n(q+j)}m
	+\frac{\lambda_n xA_n(n-j)}{mq\delta_n}.
	\]
	Consequently,
	\[
	\frac{a_j}{\delta_n}
	=
	1+O\Bigl(
	\frac{q+j_n}{m\delta_n}
	+\frac1{q\delta_n^2}
	\Bigr).
	\]
	The error tends to zero. Indeed,
	\[
	\frac{j_n}{m\delta_n}
	\le
	\frac1{q^{3/4}\delta_n^{3/2}}
	=o(1),
	\]
	while
	\[
	\frac{q}{m\delta_n}
	\lesssim\frac{q^{3/2}}m=o(1),
	\qquad
	\frac1{q\delta_n^2}=o(1).
	\]
	Here we used \(\sqrt q\,\delta_n\to\infty\) and
	\(q\ll m^{2/5}(\log m)^2\).
	Thus, uniformly for \(1\le j\le j_n\),
	\begin{equation}\label{pnjw}
	a_j=\delta_n(1+o(1)).
	\end{equation}

	Choose the cutoff
	\[
	\xi=q^{1/4}a_j^{-1/2}.
	\]
	Then
	\[
	\frac{\xi^2}{q}
	=\frac1{\sqrt q\,a_j}=o(1),
	\qquad
	a_j\xi=(\sqrt q\,a_j)^{1/2}\longrightarrow\infty.
	\]
	We split the integral in \eqref{sus2} at \(\xi\).
	For \(0\le t\le\xi\), Taylor expansion gives
	\[
	(q-1)\log(1-t/q)
	=-t+O(t/q+t^2/q)
	=-t+o(1)
	\]
	uniformly. Hence, with the substitution \(u=a_jt\),
	\begin{equation}\label{sus3}
	\begin{aligned}
	\int_0^\xi
	\Bigl(1-\frac tq\Bigr)^{q-1}
	e^{(n-j)wt/q}t^{1/2}\,dt
	&\sim\int_0^\xi e^{-a_jt}t^{1/2}\,dt\\
	&=a_j^{-3/2}
	\int_0^{a_j\xi}e^{-u}u^{1/2}\,du\\
	&\sim\frac{\sqrt\pi}{2}\delta_n^{-3/2},
	\end{aligned}
	\end{equation}
	where the last step follows from \(a_j\xi\to\infty\)
	and \eqref{pnjw}.

	To estimate the tail, note that, for \(0\le t<q\),
	\begin{equation}\label{tq}
	\bigl(1-\frac tq\bigr)^{q-1}
	=
	\exp\bigl\{(q-1)\log\bigl(1-\frac tq\bigr)\bigr\}
	\le e^{-t+t/q}
	\le e^{-t+1}.
	\end{equation}
	Hence, with the substitution \(u=a_jt\),
	\[
	\begin{aligned}
	\int_\xi^q
	\bigl(1-\frac tq\bigr)^{q-1}
	e^{(n-j)wt/q}t^{1/2}\,dt
	&\lesssim \int_\xi^q e^{-a_jt}t^{1/2}\,dt\\
	&\le a_j^{-3/2}
	\int_{a_j\xi}^\infty e^{-u}u^{1/2}\,du\\
	&\ll \delta_n^{-3/2},
	\end{aligned}
	\]
	since \(a_j\xi\to\infty\) and \(a_j\sim\delta_n\).
	Combining this bound with \eqref{sus3}, we obtain
	\[
	M_{j,j}(x)
	\sim
	\frac{\sqrt{\pi}}2\,C_j^2w^{q+1/2}
	q^{-3/2}e^{-(n-j)w}\delta_n^{-3/2}.
	\]

	Stirling's formula and Taylor expansion give
	\begin{align}
	\sqrt{\pi}\,C_j^2
	&=
	\frac{(1+o(1))e^{q}(m-j)^{q}}
	{e\sqrt{2}\pi(q-1)^{q-1/2}},
	\label{em1}\\
	w^{q+1/2}
	&\sim
	\bigl(\frac{q}{m}\bigr)^{q+1/2}
	e^{(q+1/2)\log\lambda_n-\frac{xA_n}{\delta_n}},
	\label{em2}\\
	(n-j)w
	&=
	q\lambda_n-\frac{\lambda_n xA_n}{\delta_n}
	-\frac{q\lambda_n j}{m}+o(1),
	\label{em3}\\
	\bigl(\frac{m-j}{m}\bigr)^{q}
	&=
	\exp\bigl\{-\frac{jq}{m}
	+O(q^{-1/2}\delta_n^{-1})\bigr\}
	\sim \exp\bigl\{-\frac{jq}{m}\bigr\},
	\label{em4}
	\end{align}
	where \eqref{em4} holds for \(j\le j_n\).
	By \eqref{limequation}, there exists a remainder \(E_n=o(1)\)
	such that
	\begin{equation}\label{em5}
	e^{-q\lambda_n+q+(q+1/2)\log\lambda_n}
	=
	2\sqrt{2}\pi\delta_n^{5/2}
	m^{-1/2}q^{3/2}e^{-E_n}.
	\end{equation}
	Combining these estimates, we obtain
	\[
	M_{j,j}(x)
	=
	\frac{(1+o(1))\delta_n q}{m}
	\exp\bigl(
	-x-\frac{(j-1)\delta_n q}{m}
	\bigr).
	\]
	This proves (i).

	\emph{Proof of (ii).}
	All estimates below are uniform for \(j_n<j\le \ell_n\),
	with \(x\) fixed.
	It follows from \eqref{Mjj1}--\eqref{tq} in the proof of (i),
	using
	\[
	\bigl(1-\frac tq\bigr)^{q-1}\lesssim e^{-t},
	\]
	that
	\[
	M_{j,j}(x)
	\lesssim
	C_j^2w^{q+1/2}q^{-3/2}
	e^{-(n-j)w}a_j^{-3/2}.
	\]
	Since
	\[
	a_j
	=
	\delta_n+\frac{\lambda_n(q+j)}m
	+\frac{\lambda_n xA_n}{q\delta_n}
	(1-\frac{q+j}{m})
	\ge
	\delta_n-\frac{|x|}{q\delta_n}
	=
	\delta_n
	(1-\frac{|x|}{q\delta_n^2}),
	\]
	and \(q\delta_n^2\to\infty\), we have
	\(a_j\ge\delta_n/2\) for all sufficiently large \(m\).
	Hence
	\[
	M_{j,j}(x)
	\lesssim
	C_j^2w^{q+1/2}q^{-3/2}
	e^{-(n-j)w}\delta_n^{-3/2}.
	\]

	To control the terms involving \(j\), we use the exact identity
	\[
	-(n-j)w
	=
	-q\lambda_n
	+\frac{q\lambda_n(q+j)}m
	+\frac{\lambda_n xA_n}{\delta_n}
	-\frac{\lambda_n xA_n(q+j)}{m\delta_n}
	\]
	and the bound
	\[
	(\frac{m-j}{m})^q
	\le
	\exp\big\{
	-\frac{jq}{m}
	-\frac{qj^2}{2m^2}
	\big\}.
	\]
	Since \(0<\lambda_nA_n<1\), completing the square gives
	\begin{align*}
	-\frac{qj^2}{2m^2}
	-\frac{\lambda_n xA_nj}{m\delta_n}
	&\le
	-\frac{qj^2}{2m^2}
	+\frac{|x|j}{m\delta_n}\le
	\frac{x^2}{2q\delta_n^2}
	=o(1).
	\end{align*}
	Moreover, the assumption \(1\ll q\ll m^{2/5}(\log m)^2\) implies 
	\[
	\frac{q^2}{m}=o(1),
	\qquad
	\frac{q}{m\delta_n}=o(1)
	\qquad \text{and} \qquad
	A_n\to1.
	\]
	Combining these estimates with \eqref{em1}, \eqref{em2},
	and \eqref{em5}, we obtain
	\[
	M_{j,j}(x)
	\lesssim
	\frac{q\delta_n}{m}
	\exp\{
	-x-\frac{jq\delta_n}{m}+o(1)
	\}
	\lesssim
	\frac{q\delta_n}{m}
	\exp\{
	-x-\frac{(j-1)q\delta_n}{m}
	\},
	\]
	which proves (ii).

	\emph{Proof of (iii).}
	For \(j\geq \ell_n\), the monotonicity of \(M_{j,j}(x)\) in \(j\)
	gives
	\[
	M_{j,j}(x)
	\lesssim
	\frac{q}{m}
	\exp\Bigl(-\frac{\ell_n\delta_n q}{m}\Bigr).
	\]
	Since
	\[
	\frac{\ell_n\delta_n q}{m}
	\sim
	\sqrt{\delta_n q}\gg q^{1/4},
	\]
	we obtain
	\[
	M_{j,j}(x)
	\lesssim
	\frac{q}{m}e^{-q^{1/4}}.
	\]

	\emph{Proof of (iv).}
	Set for simplicity 
	\[
	r=\frac{j+k}{2} \quad \text{and}\quad \ell=\lfloor r\rfloor.
	\]
	By Lemma~\ref{integral-rep} and the stochastic monotonicity
	argument used in the proof of Lemma~\ref{2m}(iii),
	\[
	\begin{aligned}
	|M_{j,k}(x)|
	&\le
	\frac{1}{\pi(k-j)}
	\mathbb P\bigl(
	\operatorname{Beta}(n-r+1,q)>L_n(x)
	\bigr)\\
	&\le
	\frac{1}{\pi(k-j)}
	\mathbb P\bigl(
	\operatorname{Beta}(n-\ell+1,q)>L_n(x)
	\bigr).
	\end{aligned}
	\]
	Therefore, it suffices to estimate the last probability, whose first parameter is an integer.

	With \(w=1-L_n(x)\) as above, the changes of variables
	used in \eqref{sus} and \eqref{sus2}, now without the
	angular factor, together with \eqref{tq} and
	\(a_\ell\ge\delta_n/2\), give
	\[
	\begin{aligned}
	\int_{L_n(x)}^1
	(1-u)^{q-1}u^{n-\ell}\,du
	&\le
	w^q q^{-1}e^{-(n-\ell)w}
	\int_0^q
	\Bigl(1-\frac tq\Bigr)^{q-1}
	e^{(n-\ell)wt/q}\,dt\\
	&\lesssim
	\frac{w^q e^{-(n-\ell)w}}
	{q\delta_n}.
	\end{aligned}
	\]
	Combining this bound with the normalization estimates
	used in parts (i) and (ii), we obtain
	\[
	\mathbb P\bigl(
	\operatorname{Beta}(n-\ell+1,q)>L_n(x)
	\bigr)
	\lesssim
	\frac{\delta_n^{3/2}q}
	{\sqrt{m\lambda_n}}
	\exp\bigl(
	-x-\frac{(\ell-1)\delta_n q}{m}
	\bigr).
	\]
	Since \(0\le r-\ell<1\) and
	\[
	\frac{q\delta_n}{m}\to0,
	\]
	replacing \(\ell-1\) by \(r\) in the exponential changes
	the bound by a factor \(1+o(1)\). Thus
	\[
	|M_{j,k}(x)|
	\lesssim
	\frac{\delta_n^{3/2}q}
	{\sqrt{m\lambda_n}(k-j)}
	\exp\bigl(
	-x-\frac{(j+k)\delta_n q}{2m}
	\bigr).
	\]
	Finally, \(m\lambda_n\gg m^{9/10}\) by
	Lemma~\ref{nlambda-lower}. Absorbing the fixed factor
	\(e^{-x}\) into the implicit constant proves
	\eqref{Mjk-offdiag} for both parities of \(j+k\).
\end{proof}

\subsection{Proof of Theorem~1 for \(1\ll q\ll m^{2/5}(\log m)^2\)}
As in Section~3, we are going to prove
\[
\operatorname{Tr}(M(x))\sim e^{-x}\qquad\text{and}\qquad \|M(x)\|_{\mathrm{HS}}=o(1).
\]
Lemma~\ref{1nkM} gives, for \(j\ge \ell_n\),
\[
M_{j,j}(x)\lesssim \frac{q}{m}e^{-q^{1/4}},
\]
whence
\begin{equation}\label{trace1}
	\sum_{j=\ell_n+1}^{n}M_{j,j}(x)\lesssim q e^{-q^{1/4}}\ll 1.
\end{equation}

For \(1\le j\le j_n\), part (i) of Lemma~\ref{1nkM} yields
\begin{equation}\label{trace2}
	\sum_{j=1}^{j_n}M_{j,j}(x)
	\sim (1-\lambda_n)q m^{-1}e^{-x}
	\sum_{j=1}^{j_n}\exp\bigl(-\frac{(j-1)(1-\lambda_n)q}{m}\bigr).
\end{equation}
Since
\begin{equation}\label{jne}
	\frac{(j_n-1)(1-\lambda_n)q}{m}
	\sim q^{1/4}\sqrt{1-\lambda_n}\gg 1,
\end{equation}
the geometric series formula gives
\[
\sum_{j=1}^{j_n}\exp\bigl(-\frac{(j-1)(1-\lambda_n)q}{m}\bigr)
\sim \frac{m}{(1-\lambda_n)q}.
\]
Substituting this into \eqref{trace2}, we obtain
\begin{equation}\label{jn}
	\sum_{j=1}^{j_n}M_{j,j}(x)\sim e^{-x}.
\end{equation}

For \(j_n<j\le \ell_n\), part (ii) of Lemma~\ref{1nkM} together with \eqref{jne} implies
\[
\begin{aligned}
	\sum_{j=j_n+1}^{\ell_n}M_{j,j}(x)
	&\lesssim (1-\lambda_n)q m^{-1}e^{-x}
	\sum_{j=j_n+1}^{\ell_n}\exp\bigl(-\frac{(j-1)(1-\lambda_n)q}{m}\bigr)\\
	&\lesssim e^{-x}\exp\bigl(-\frac{j_n(1-\lambda_n)q}{m}\bigr)\ll 1.
\end{aligned}
\]
Combining this with \eqref{trace1} and \eqref{jn}, we conclude that
\begin{equation}\label{trace}
	\operatorname{Tr}(M(x))=\sum_{j=1}^{n}M_{j,j}(x)\sim e^{-x}.
\end{equation}
We now estimate the Hilbert--Schmidt norm.
Since $\operatorname{Tr}(M(x))=O(1)$ by \eqref{trace},
we apply Lemma~\ref{HS} with $\alpha_n=\ell_n$
and $\beta_n=t_n$.

We first consider the off-diagonal terms with
$k-j>t_n$. By Lemma~\ref{1nkM}(iv),
\[
\begin{aligned}
\sum_{\substack{1\le j<k\le \ell_n\\ k-j>t_n}}
|M_{j,k}(x)|^2
&\lesssim
\frac{(1-\lambda_n)^3q^2}{m^{9/10}}
\sum_{j=1}^{\ell_n}
\sum_{k=j+t_n+1}^{\ell_n}
\frac{1}{(k-j)^2}
\exp\bigl(
-\frac{(j+k)(1-\lambda_n)q}{m}
\bigr)\\
&\le
\frac{(1-\lambda_n)^3q^2}{m^{9/10}}
\sum_{j=1}^{\ell_n}
\exp\bigl(
-\frac{2j(1-\lambda_n)q}{m}
\bigr)
\sum_{k=j+t_n+1}^{\ell_n}\frac1{(k-j)^2}.
\end{aligned}
\]
An empty inner sum is understood to be zero.
Since
\[
\sum_{k=j+t_n+1}^{\ell_n}\frac1{(k-j)^2}
\le\sum_{d=t_n+1}^{\infty}d^{-2}
\le t_n^{-1},
\]
and $q(1-\lambda_n)/m\to0$, the geometric series gives
\[
\begin{aligned}
\sum_{j=1}^{\ell_n}
\exp\bigl(
-\frac{2j(1-\lambda_n)q}{m}
\bigr)
&\le
\frac{1}
{\exp\{2(1-\lambda_n)q/m\}-1}\\
&\lesssim
\frac{m}{q(1-\lambda_n)}.
\end{aligned}
\]
Consequently,
\begin{equation}\label{HS11}
\begin{aligned}
\sum_{\substack{1\le j<k\le \ell_n\\ k-j>t_n}}
|M_{j,k}(x)|^2
&\lesssim
\frac{(1-\lambda_n)^2q\,m^{1/10}}{t_n}\\
&\lesssim
\frac{q^{3/5}\log m}{m^{1/2}}
=o(1),
\end{aligned}
\end{equation}
where we used $q(1-\lambda_n)^2\lesssim\log m$,
$t_n\asymp(m/q)^{3/5}$, and
$q\ll m^{2/5}(\log m)^2$.

For the diagonal tail, Lemma~\ref{1nkM}(iii)
yields
\begin{equation}\label{HS12}
nM_{\ell_n, \ell_n}(x)
\lesssim
\frac{nq}{m}e^{-q^{1/4}}
\lesssim q e^{-q^{1/4}}
=o(1).
\end{equation}
Finally, Lemma~\ref{1nkM}(i) and the definitions of
$\ell_n$ and $t_n$ imply
\[
\begin{aligned}
\ell_n t_n|M_{1,1}(x)|^2
&\lesssim
\frac{m}{\sqrt{q(1-\lambda_n)}}
\bigl(\frac mq\bigr)^{3/5}
\bigl(\frac{(1-\lambda_n)q}{m}\bigr)^2\\
&=
\frac{q^{9/10}(1-\lambda_n)^{3/2}}{m^{2/5}}\\
&\le
\frac{q^{9/10}}{m^{2/5}}
=o(1).
\end{aligned}
\]
Combining these estimates with \eqref{HS11},
\eqref{HS12}, and Lemma~\ref{HS}, we conclude that
\[
\|M(x)\|_{\mathrm{HS}}=o(1).
\]
This completes the proof of Theorem~1 in this regime.

\section{The case of fixed \(q\)}

The task of this section is to verify 
$$
\operatorname{Tr}(M(x))\sim
\bigl(1-\frac{x}{q}\bigr)^{q+\frac12}
$$
and $\|M(x)\|_{\rm HS}=o(1).$
Throughout this section, $q$ is a fixed positive integer, and
$\lambda_n$ is defined by \eqref{equation}.
We first determine its asymptotic form:
\begin{equation}\label{fixed-U-expansion}
\lambda_n
\sim
\bigl(
\frac{2(q+1)\sqrt{\pi}\,\Gamma(q+\frac32)}
     {m^{1/2}q^{q+\frac12}}
\bigr)^{\frac{2}{2q+1}}.
\end{equation}
In particular, $\lambda_n\asymp m^{-1/(2q+1)}$.

To see this, note that the right-hand side of
\eqref{equation} tends to infinity, while its left-hand
side is strictly decreasing in $\lambda$. Since
$mq/n\to q$, it follows that $\lambda_n\to0$.
The defining equation then gives
$-\log\lambda_n=O(\log m)$.
Using $mq/n=q+O(m^{-1})$ and expanding the terms
that vanish as $\lambda_n\to0$, we obtain
\[
-\bigl(q+\frac12\bigr)\log\lambda_n
=
\frac12\log m
+\log\frac{q^{q+\frac12}}
           {2(q+1)\sqrt{\pi}\,\Gamma(q+\frac32)}
+O\bigl(\lambda_n+\frac{\log m}{m}\bigr).
\]
The remainder tends to zero, so exponentiation yields
\eqref{fixed-U-expansion}.

We use \eqref{fixed-U-expansion}, keeping the gamma factor exact,
to estimate the entries of \(M(x)\).
\begin{lem}\label{3m}
For each fixed \(x<q\), the following estimates hold uniformly over
the indicated ranges of indices.
	\begin{enumerate}
		\item[(i)] For \(1\le j\le n-\lfloor n^{3/5}\rfloor\),
		\[
		M_{j,j}(x)\sim
		\frac{q+1}{m^{q+1}}(n-j+1)^{q}
		\bigl(1-\frac{x}{q}\bigr)^{q+\frac12}.
		\]

		\item[(ii)] For \(1\le j,k\le n\),
		$$|M_{j,k}(x)|\lesssim \frac{1}{m}. $$
	\end{enumerate}
\end{lem}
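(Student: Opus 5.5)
The plan is to redo the computation of Lemma~\ref{1nkM}(i) in the simpler setting where $q$ is fixed. The new ingredients are that $w$ is now so small that the factor $u^{n-j}$ is essentially $1$, and that the normalization comes from \eqref{fixed-U-expansion}. Set
\[
w:=1-L_n(x)=\frac{q\lambda_n}{m}-\frac{\lambda_nxA_n}{m(1-\lambda_n)}
=\frac{\lambda_n q}{m}\Bigl(1-\frac{xA_n}{q(1-\lambda_n)}\Bigr).
\]
Since $\lambda_n\to0$ and $A_n\to1$, this gives $w\sim\frac{\lambda_nq}{m}(1-x/q)$. For fixed $x<q$ we have $w>0$, so $0<L_n(x)<1$ for large $n$ and Lemma~\ref{integral-rep} applies.

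For (i), I start from \eqref{wMjj1}. Because $L_n(x)\to1$, we have $\arccos\sqrt{L_n(x)/u}=\sqrt{u-L_n(x)}\,(1+O(w))$ uniformly on $(L_n(x),1)$, exactly as in \eqref{Mjj1}. Substituting $u=1-ws$ gives
\[
M_{j,j}(x)\sim C_j^2\,w^{q+1/2}\int_0^1 s^{q-1}\sqrt{1-s}\,(1-ws)^{n-j}\,ds .
\]
The key simplification is $(n-j)w\le nw\lesssim\lambda_n\to0$. Hence $(1-ws)^{n-j}=1+o(1)$ uniformly in $s\in[0,1]$ and in $j$. The integral is therefore asymptotically $B(q,\tfrac32)=\Gamma(q)\Gamma(\tfrac32)/\Gamma(q+\tfrac32)$.

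Next I handle the normalization $C_j^2=\Gamma(n-j+1+q)/(\pi\Gamma(q)\Gamma(n-j+1))$. In the range $j\le n-\lfloor n^{3/5}\rfloor$ we have $N:=n-j+1\ge n^{3/5}\to\infty$. The standard ratio $\Gamma(N+q)/\Gamma(N)\sim N^q$ then holds uniformly, so $C_j^2\sim (n-j+1)^q/(\pi\Gamma(q))$; this is the only place the cutoff $n^{3/5}$ is used. Combining the pieces,
\[
M_{j,j}(x)\sim\frac{(n-j+1)^q\sqrt\pi}{2\pi\Gamma(q+\frac32)}\Bigl(\frac{\lambda_nq}{m}\Bigr)^{q+1/2}\Bigl(1-\frac xq\Bigr)^{q+1/2}.
\]
Finally I insert \eqref{fixed-U-expansion} in the form $\lambda_n^{q+1/2}\sim 2(q+1)\sqrt\pi\,\Gamma(q+\frac32)\,m^{-1/2}q^{-q-1/2}$. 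The gamma factors and the powers of $q$ cancel, leaving $(q+1)(n-j+1)^q m^{-q-1}(1-x/q)^{q+1/2}$, which is (i).

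For (ii), I will not estimate the off-diagonal entries directly. By the Cauchy--Schwarz bound \eqref{CSIneq} and the monotonicity of $M_{j,j}(x)$ in Lemma~\ref{integral-rep}, we get $|M_{j,k}(x)|\le\sqrt{M_{j,j}(x)M_{k,k}(x)}\le M_{1,1}(x)$. By (i) with $j=1$, $M_{1,1}(x)\sim (q+1)n^q m^{-q-1}(1-x/q)^{q+1/2}\le (q+1)/m$, since $n<m$. This gives $|M_{j,k}(x)|\lesssim 1/m$ uniformly in $j,k$.

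The main point needing care is the uniformity in (i). One must check that the relative errors from the arccos expansion, from $(1-ws)^{n-j}$, from $A_n$ and $1-\lambda_n$ in $w$, and from the gamma ratio are all $o(1)$ independently of $j$. Each is controlled by $w$, by $\lambda_n$, by $O(\lambda_n+m^{-1})$, or by $(n-j+1)^{-1}\le n^{-3/5}$ respectively, so this should be routine bookkeeping. The exact gamma constant in \eqref{equation} is what makes the final constant come out exactly as $q+1$.
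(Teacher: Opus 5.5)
Your proposal is correct and follows essentially the same route as the paper: the $\arccos$ expansion, the substitution $u=1-ws$, the observation $(n-j)w\lesssim\lambda_n\to0$ that reduces the $s$-integral to $B(q,\tfrac32)$, the ratio $\Gamma(m-j+1)/\Gamma(n-j+1)\sim(n-j+1)^q$ for $n-j\ge n^{3/5}$, insertion of \eqref{fixed-U-expansion}, and Cauchy--Schwarz plus monotonicity of the diagonal for (ii). One small slip: by (i), $M_{1,1}(x)$ is asymptotically at most $(q+1)(1-x/q)^{q+1/2}/m$, not $(q+1)/m$, since the extra factor exceeds $1$ when $x<0$; because $x$ is fixed, this still gives $|M_{j,k}(x)|\lesssim 1/m$.
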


\begin{proof}
	Since \(q\) is fixed, we have
	\[
	L_n(x)=A_n+B_nx=1-\frac{q\lambda_n}{m}\bigl(1-\frac{x}{q}\bigr)(1+O(\lambda_n))\sim 1.
	\]
	Proceeding as in the proof of Lemma~\ref{1nkM} and applying \eqref{sus}, we obtain
	\[
	M_{j,j}(x)=(1+o(1))C_j^2(1-L_n(x))^{q+\frac12}
	\int_{0}^{1}s^{q-1}e^{-(n-j)(1-L_n(x))s}\sqrt{1-s}\,ds.
	\]
	Observe that
	\begin{equation}\label{1-Ln}
		1-L_n(x)=\frac{q\lambda_n}{m}\Bigl(1-\frac{x}{q}\Bigr)(1+O(\lambda_n)).
	\end{equation}
	Since \(\lambda_n\ll 1\), we have
	$$(n-j)(1-L_n(x))s\lesssim \lambda_n\ll 1. $$
	Consequently,
	\[
	\int_{0}^{1}s^{q-1}e^{-(n-j)(1-L_n(x))s}\sqrt{1-s}\,ds
	\sim
	\int_{0}^{1}s^{q-1}\sqrt{1-s}\,ds
	=
	\frac{\sqrt{\pi}}{2}\frac{\Gamma(q)}{\Gamma(q+\frac32)}.
	\]
	Recalling
	\[
	C_j=\sqrt{\frac{\Gamma(m-j+1)}{\pi\Gamma(q)\Gamma(n-j+1)}},
	\]
	we obtain
	\[
	M_{j,j}(x)\sim
	\frac{\Gamma(m-j+1)}{2\sqrt{\pi}\,\Gamma(q+\frac32)\Gamma(n-j+1)}
	\Bigl(\frac{q\lambda_n}{m}\Bigl(1-\frac{x}{q}\Bigr)\Bigr)^{q+\frac12}.
	\]
	Substituting the asymptotic expression for \(\lambda_n\) gives
	\[
	M_{j,j}(x)\sim
	\frac{(m-j)!\,(q+1)}{(n-j)!\,m^{q+1}}
	\Bigl(1-\frac{x}{q}\Bigr)^{q+\frac12}.
	\]
	For \(1\le j\le n-\lfloor n^{3/5}\rfloor\), we have \(n-j\gg 1\), and hence
	\[
	\frac{(m-j)!}{(n-j)!}
	=
	(n-j+1)^{q}
	\prod_{\ell=1}^{q}\Bigl(1+\frac{\ell-1}{n-j+1}\Bigr)
	\sim (n-j+1)^{q}.
	\]
	Therefore,
	\[
	M_{j,j}(x)\sim
	\frac{q+1}{m^{q+1}}(n-j+1)^{q}
	\Bigl(1-\frac{x}{q}\Bigr)^{q+\frac12}.
	\]
	This proves (i).

	For arbitrary entries, \eqref{CSIneq} and the monotonicity of \(M_{j,j}(x)\) in \(j\) give
	\[
	|M_{j,k}(x)|\le \sqrt{M_{j,j}(x)M_{k,k}(x)}\le M_{1,1}(x)\lesssim \frac{1}{m}.
	\]
	This proves (ii) and completes the proof.
\end{proof}

\begin{lem}\label{HS3}
For fixed $q$ and $x<q$, we have
\[
\|M(x)\|_{\mathrm{HS}}=o(1).
\]
\end{lem}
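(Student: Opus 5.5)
The bound $|M_{j,k}(x)|\lesssim 1/m$ of Lemma~\ref{3m}(ii) is not enough by itself: summing its square over all $n^2$ pairs gives only $\|M(x)\|_{\mathrm{HS}}^2\lesssim n^2/m^2=O(1)$. The plan is to combine it with the $1/|k-j|$ decay of the off-diagonal bound \eqref{wMjk}. We then split the Hilbert--Schmidt sum directly into a band near the diagonal and its complement. Lemma~\ref{HS} is not convenient here, because $nM_{\alpha_n,\alpha_n}(x)$ does not tend to zero for fixed $q$ unless $\alpha_n$ is very close to $n$.

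\emph{Step 1: a uniform off-diagonal bound.} Write $w=1-L_n(x)$. By \eqref{1-Ln}, $w\asymp \lambda_n/m$ for fixed $x<q$. For $r=(j+k)/2$ we bound the $u$-integral in \eqref{wMjk} by dropping the factor $u^{n-r}\le1$:
\[
\int_{L_n(x)}^1(1-u)^{q-1}u^{n-r}\,du\le \int_{L_n(x)}^1(1-u)^{q-1}\,du=\frac{w^q}{q}.
\]
Next, since $C_t^2=1/(\pi B(n-t+1,q))$ is nonincreasing in $t$, we have $C_r^2\le C_1^2=\Gamma(m)/(\pi\Gamma(q)\Gamma(n))\asymp m^q$. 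Together these give, for $j\neq k$,
\[
|M_{j,k}(x)|\lesssim \frac{(mw)^q}{|k-j|}\asymp\frac{\lambda_n^q}{|k-j|},
\]
uniformly in $j,k$. Since $\lambda_n\asymp m^{-1/(2q+1)}$ by \eqref{fixed-U-expansion}, this is $m^{-q/(2q+1)}/|k-j|$.

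\emph{Step 2: splitting the sum.} Take $\beta_n=\lfloor m^{1/2}\rfloor$. For pairs with $|k-j|\le\beta_n$, including the diagonal, we use Lemma~\ref{3m}(ii). There are $O(n\beta_n)$ such pairs, so their contribution is $\lesssim n\beta_n/m^2\lesssim m^{-1/2}$. For pairs with $|k-j|>\beta_n$, Step 1 gives
\[
\sum_{|k-j|>\beta_n}|M_{j,k}(x)|^2\lesssim \lambda_n^{2q}\sum_{j=1}^n\sum_{d>\beta_n}d^{-2}\lesssim \frac{n\lambda_n^{2q}}{\beta_n}\asymp m^{\frac{1}{2q+1}-\frac12}.
\]
Since $q\ge1$, the exponent is at most $1/3-1/2<0$, so this contribution also vanishes. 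Hence $\|M(x)\|_{\mathrm{HS}}^2=o(1)$.

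\emph{Main obstacle.} The only delicate point is Step 1. We must make sure that $C_{(j+k)/2}^2$ is controlled uniformly, including when $(j+k)/2$ is not an integer and when $n-(j+k)/2$ is small. The monotonicity of $B(a,q)$ in $a$ handles this, since it gives $C_t^2\le C_1^2$ for every real $t\in[1,n]$. We also need $w\lesssim\lambda_n/m$ uniformly, which is exactly \eqref{1-Ln} with $x<q$ fixed. No sharper asymptotics are needed, because the crude bound $u^{n-r}\le1$ already leaves a polynomial margin in $m$.
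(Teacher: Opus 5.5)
Your proof is correct and follows essentially the same route as the paper. You split the Hilbert--Schmidt sum into a band around the diagonal, bounded with the uniform estimate $|M_{j,k}(x)|\lesssim 1/m$, and its complement, bounded with $|M_{j,k}(x)|\lesssim\lambda_n^q/|k-j|$ obtained from \eqref{wMjk} after dropping $u^{n-(j+k)/2}\le 1$. The only differences are minor: you take the cutoff $\lfloor m^{1/2}\rfloor$ where the paper uses $\lfloor n^{4/5}\rfloor$, and you control $C_{(j+k)/2}^2\le C_1^2\lesssim m^q$ by monotonicity of $B(a,q)$ in $a$ rather than by the paper's explicit product formula for fixed integer $q$; both choices work.
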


\begin{proof}
We split the sum defining the squared Hilbert--Schmidt
norm according to the size of $|j-k|$:
\[
\|M(x)\|_{\mathrm{HS}}^2
=
\sum_{|j-k|<\lfloor n^{4/5}\rfloor}|M_{j,k}(x)|^2
+
\sum_{|j-k|\ge\lfloor n^{4/5}\rfloor}|M_{j,k}(x)|^2.
\]
By Lemma~\ref{3m}(ii), $|M_{j,k}(x)|\lesssim m^{-1}$.
Since the first sum contains $O(n^{9/5})$ terms and
$n\sim m$, we obtain
\[
\sum_{|j-k|<\lfloor n^{4/5}\rfloor}|M_{j,k}(x)|^2
\lesssim n^{9/5}m^{-2}
=o(1).
\]

For the second sum, Lemma~\ref{integral-rep} gives,
for $j\ne k$,
\[
|M_{j,k}(x)|
\le
\frac{C_{(j+k)/2}^2}{|k-j|}
\int_{L_n(x)}^1
(1-u)^{q-1}u^{n-(j+k)/2}\,du.
\]
Because $q$ is a fixed positive integer,
\[
\begin{aligned}
C_{(j+k)/2}^2
&=
\frac{\Gamma(m-(j+k)/2+1)}
     {\pi\Gamma(q)\Gamma(n-(j+k)/2+1)}\\
&=
\frac1{\pi\Gamma(q)}
\prod_{\ell=1}^{q}
\Bigl(n-\frac{j+k}{2}+\ell\Bigr)
\lesssim m^q.
\end{aligned}
\]
This identity holds for both integer and half-integer
values of $(j+k)/2$.
Moreover, $n-(j+k)/2\ge0$, so
\[
\int_{L_n(x)}^1
(1-u)^{q-1}u^{n-(j+k)/2}\,du
\le
\frac{(1-L_n(x))^q}{q}.
\]
Using \eqref{1-Ln}, we therefore obtain
\[
|M_{j,k}(x)|
\lesssim
\frac{m^q(1-L_n(x))^q}{|k-j|}
\lesssim
\frac{\lambda_n^q}{|k-j|}.
\]
Consequently, using the asymptotic $\lambda_n\asymp m^{-1/(2q+1)}$, we have 
\[
\begin{aligned}
\sum_{|j-k|\ge\lfloor n^{4/5}\rfloor}|M_{j,k}(x)|^2
&\lesssim
n\lambda_n^{2q}
\sum_{d=\lfloor n^{4/5}\rfloor}^{\infty}d^{-2}\asymp
m^{\,1/5-2q/(2q+1)}
=o(1).
\end{aligned}
\]

Combining the two estimates proves the lemma.
\end{proof}

\subsection{Proof of Theorem~1 for fixed \(q\)}

Recall that
\[
L_n(x)=A_n+B_nx
=1-\frac{q\lambda_n}{m}
\Bigl(1-\frac{A_nx}{q(1-\lambda_n)}\Bigr).
\]
Since $A_n-(1-\lambda_n)=n\lambda_n/m>0$, we have
$L_n(x)>1$ for $x\ge q$. Together with $R_n^2\le1$,
this implies
\[
\mathbb P(\widehat X_n\le x)=1,
\qquad x\ge q.
\]
% Since\[
% A(x)=\{z\in\mathbb{D}: \Re z>\sqrt{L_n(x)}\},
% \]
% we have \(A(x)=\varnothing\). By \eqref{MJkdefine},
% \[
% M_{j,k}(x)=0\qquad\text{for all }1\le j,k\le n.
% \]

It remains to consider \(x<q\). By Lemma~\ref{HS3},
\[
\|M(x)\|_{\mathrm{HS}}\ll 1.
\]
Thus, it suffices to estimate the trace. Write
\[
\operatorname{Tr}(M(x))=
\sum_{j=1}^{n-\lfloor n^{3/5}\rfloor}M_{j,j}(x)
+
\sum_{j=n-\lfloor n^{3/5}\rfloor+1}^{n}M_{j,j}(x).
\]
By Lemma~\ref{3m}(ii), the tail satisfies
\begin{equation}\label{mjjll1}
	\sum_{j=n-\lfloor n^{3/5}\rfloor+1}^{n}M_{j,j}(x)
	\lesssim n^{3/5}m^{-1}
	\lesssim m^{-2/5}\ll 1.
\end{equation}
For the main term, Lemma~\ref{3m} gives
\[
\sum_{j=1}^{n-\lfloor n^{3/5}\rfloor}M_{j,j}(x)
\sim
\frac{q+1}{m}
\Bigl(1-\frac{x}{q}\Bigr)^{q+\frac12}
\sum_{j=1}^{n-\lfloor n^{3/5}\rfloor}
\Bigl(\frac{n-j+1}{m}\Bigr)^{q}.
\]
Changing the summation index to \(i=n-j+1\), we have
\[
\sum_{j=1}^{n-\lfloor n^{3/5}\rfloor}
\Bigl(\frac{n-j+1}{m}\Bigr)^{q}
=
\sum_{i=\lfloor n^{3/5}\rfloor+1}^{n}
\Bigl(\frac{i}{m}\Bigr)^{q}.
\]
Since \(n/m\to 1\) and \(\lfloor n^{3/5}\rfloor/m\to 0\), a Riemann sum approximation yields
\[
\frac1m\sum_{i=\lfloor n^{3/5}\rfloor+1}^{n}
\Bigl(\frac{i}{m}\Bigr)^{q}
\sim
\int_0^1 t^{q}\,dt
=
\frac1{q+1}.
\]
Thus
\[
\sum_{i=\lfloor n^{3/5}\rfloor+1}^{n}
\Bigl(\frac{i}{m}\Bigr)^{q}
\sim
\frac{m}{q+1}.
\]
Substituting this into the preceding expression gives
\[
\sum_{j=1}^{n-\lfloor n^{3/5}\rfloor}M_{j,j}(x)
\sim
\Bigl(1-\frac{x}{q}\Bigr)^{q+\frac12}.
\]
Together with \eqref{mjjll1}, this yields
\[
\operatorname{Tr}(M(x))\sim
\bigl(1-\frac{x}{q}\bigr)^{q+\frac12}.
\]
Combining this trace limit with Lemma~\ref{HS3},
\eqref{trhs}, and \eqref{prob}, we obtain, for $x<q$,
\[
\lim_{n\to\infty}\mathbb P(\widehat X_n\le x)
=
\exp\bigl\{-\bigl(1-\frac{x}{q}\bigr)^{q+\frac12}\bigr\}.
\]
Together with the case $x\ge q$ considered above, this
establishes the limiting distribution of $\widehat X_n$.
The asymptotic equivalence of the two normalizations gives
the asserted limit for $X_n$, completing the proof
of Theorem~1.
\subsection{Proof of Remark 1.2}
	Throughout, \(x\) is fixed, and the implicit constants may depend on
	\(x\).
	
	We first consider \(q\to\infty\). In the regime of Section~3,
	we obtain from \eqref{TrMx} and \eqref{TrMx1} that 
	\[
	\operatorname{Tr}(M(x))
	=
	e^{-x}
	\Bigl(1+\frac{x}{\gamma_n}\Bigr)^{-5/2}
	e^{-\frac{x^2}{2\gamma_n}}
	\bigl(1+o(\gamma_n^{-1})\bigr).
	\]
	Hence
	\[
	\operatorname{Tr}(M(x))
	=
	e^{-x}+O(\gamma_n^{-1}).
	\]
	Since
	\[
	\gamma_n\asymp\log s_n,
	\qquad
	s_n=\frac{mn}{q}
	=n\Bigl(1+\frac nq\Bigr),
	\]
	we have \(\log s_n\asymp\log n\), and therefore
	\begin{equation}\label{rate-large-trace}
		\operatorname{Tr}(M(x))
		=
		e^{-x}+O((\log n)^{-1}).
	\end{equation}
	
	We next consider the regime of Section~4. Write
	\[
	\delta_n=1-\lambda_n,
	\qquad
	r_n=\frac{1}{q\delta_n^2}.
	\]
	By \eqref{sus2}-\eqref{sus3}, with $a_j\sim\delta_n$ from \eqref{pnjw}, the main contribution comes from $t\asymp\delta_n^{-1}$. On this scale,
	\[
	(q-1)\log\Bigl(1-\frac tq\Bigr)
	=
	-t+\frac tq-\frac{t^2}{2q}
	+O\Bigl(\frac{t^2+t^3}{q^2}\Bigr).
	\]
	Hence
	\[
	\frac{t^2}{q}\asymp r_n,
	\qquad
	\frac{t}{q}=O(r_n),
	\qquad
	\frac{t^2+t^3}{q^2}=o(r_n),
	\]
	where \(r_n=(q\delta_n^2)^{-1}\) and we used 
	\(\sqrt q\,\delta_n\to\infty\) from \eqref{eq:result}.
	Thus the quadratic term determines the first-order correction,
	while the remaining terms are of order \(r_n\) or smaller.
	The errors arising from the prefactor expansions \eqref{em1}-\eqref{em5} and the subsequent
	summation \eqref{trace2}-\eqref{jn} are of the same or smaller order.
	Consequently,
	\begin{equation}\label{rate-middle-trace}
		\operatorname{Tr}M(x)
		=
		e^{-x}+O(r_n).
	\end{equation}
	
	We now determine the order of \(r_n\). If
	\(
	1\ll q\lesssim\log n,
	\)
	then
	\(
	\delta_n\asymp1.
	\)
	Indeed, otherwise there would exist a subsequence along which
	\(\delta_n\to0\). Along this subsequence,
	\[
	q(\lambda_n-1-\log\lambda_n)
	=
	O(q\delta_n^2)
	=
	o(\log n),
	\]
	while \(-\log\lambda_n=O(\delta_n)\), and the term
	\(\log\delta_n\) in the left-hand side of \eqref{limequation}
	is nonpositive. Thus the left-hand side of \eqref{limequation} is
	bounded above by \(o(\log n)\), whereas
	\[
	\frac12\log\frac{m}{8\pi^2q^3}
	=
	\frac12\log n+O(\log\log n),
	\]
	a contradiction. Hence
	\begin{equation}\label{rate-small-q}
		q\delta_n^2\asymp q,
		\qquad
		r_n\asymp q^{-1}.
	\end{equation}
	
	Now suppose
	\[
	\log n\ll q\ll m^{2/5}(\log m)^2.
	\]
	 In this case \eqref{limequation} implies
	\(\delta_n\to0\), since otherwise
	\(q(\lambda_n-1-\log\lambda_n)\) would dominate its
	\(O(\log n)\) right-hand side.
	
	Set
	\[
	y_n=q\delta_n^2.
	\]
	Expanding \eqref{limequation} at \(\lambda_n=1\) gives
	\[
	y_n+\frac52\log y_n
	=
	\log\frac{m}{\sqrt q}
	+O(1+q\delta_n^3).
	\]
	By Lemma \ref{nlambda-lower}, \(y_n\to\infty\), and
	\[
	q\delta_n^3=y_n\delta_n=o(y_n).
	\]
	Moreover,
	\[
	\log\frac{m}{\sqrt q}\asymp\log n
	\]
	under the assumptions of Section~4. It follows that
	$$q\delta_n^2\asymp\log n,
	\qquad
	r_n\asymp(\log n)^{-1}. $$
	Together with \eqref{rate-middle-trace} and
	\eqref{rate-large-trace}, this proves the first two trace error
	bounds.
	
	Finally, let \(q\) be fixed. By \eqref{fixed-U-expansion},
	\[
	\lambda_n\asymp m^{-1/(2q+1)}.
	\]
	Retaining one further term in the expansions of Section~5 gives,
	uniformly for
	\(
	1\le j\le n-\lfloor n^{3/5}\rfloor,
	\)
	\[
	M_{j,j}(x)
	=
	\frac{q+1}{m^{q+1}}
	(n-j+1)^q
	\Bigl(1-\frac{x}{q}\Bigr)^{q+1/2}
	\bigl(1+O(\lambda_n)\bigr),
	\qquad x<q.
	\]
	By \eqref{mjjll1}, the omitted diagonal tail is $o(\lambda_n)$. Moreover,
	\[
	\frac{q+1}{m^{q+1}}
	\sum_{j=1}^n(n-j+1)^q
	=
	1+O(m^{-1})
	=
	1+o(\lambda_n).
	\]
	Therefore
	$$	\operatorname{Tr}M(x)
	=
	\Bigl(1-\frac{x}{q}\Bigr)^{q+1/2}
	+O(\lambda_n),
	\qquad x<q. $$

	It remains to check that the determinant--trace approximation and
	the change of normalization do not affect the preceding error scales.
	Refining the Hilbert--Schmidt estimates used in Sections~3--5 shows
	that
	\[
	\|M(x)\|_{\mathrm{HS}}^2
	\]
	can be controlled at or below the corresponding trace-error scale in
	each regime. Since \(\|M(x)\|_{\mathrm{HS}}=o(1)\),
	\[
	\log\det({\rm I}-M(x))
	=
	-\operatorname{Tr}M(x)
	+O\bigl(\|M(x)\|_{\mathrm{HS}}^2\bigr),
	\]
	so the determinant--trace remainder does not change the stated orders.
	
	Finally, passing from \(\widehat X_n\) to \(X_n\) shifts the
	corresponding squared threshold by
	\[
	\frac{B_nx^2}{4A_n}
	=
	\frac{\lambda_nx^2}{4m(1-\lambda_n)},
	\]
	which is negligible compared with the relevant error scale in all
	three regimes. The bound $\mathbb{P}(R_n<0)\le e^{-n/2}$ from Section 2.1 is exponentially smaller than all three error scales. Hence the trace estimates
	above yield the stated pointwise error bounds.
\begin{rmk}[The three regimes and their proofs]
	The three regimes reflect the ranges in which different estimates for
	the beta tails are effective.
		
	For \(q\to\infty\), we further distinguish two ranges. When
	\(
	m^{2/5}(\log m)^2\lesssim q<m,
	\)
	Section~3 uses the gamma representation of the beta distribution
	together with a Cram\'er-type moderate deviation estimate to obtain a
	uniform Gaussian tail approximation. When
	\(
	1\ll q\ll m^{2/5}(\log m)^2,
	\)
	this uniform error control is no longer available over the full range
	needed in the proof, and Section~4 instead estimates the beta integrals
	directly. The latter method also requires \(q\) to remain sufficiently
	small relative to \(m\).
	
	When \(q\) is fixed, the Gaussian-type approximation used for
	\(q\to\infty\) is not available in the form needed here, so Section~5
	treats this case separately by an endpoint expansion.

	Thus the boundary
	\(
	q\asymp m^{2/5}(\log m)^2
	\)
	is  merely a technical division between two proof methods, rather than a
	transition in the limiting law. Its precise location is not intrinsic
	and is not unique.
\end{rmk}

\end{document}